\documentclass[a4paper,12pt]{amsart}
\usepackage[a4paper,margin=1.2in]{geometry}
\usepackage[T1]{fontenc}
\usepackage[utf8]{inputenc} 
\usepackage[english]{babel}
\usepackage{lmodern}
\usepackage{microtype}
\usepackage{xcolor}

\usepackage{amsmath,amssymb,amsfonts,amsthm}
\usepackage{mathtools}
\numberwithin{equation}{section}
\usepackage{amsrefs}
\allowdisplaybreaks

\usepackage{graphicx}
\usepackage{xcolor}
\usepackage{caption}
\usepackage{subcaption}
\usepackage{wrapfig}
\usepackage{float}
\restylefloat{table}
\usepackage{diagbox}

\usepackage{tikz}
\usepackage{tikz-cd}

\usepackage{enumitem}
\usepackage{csquotes}
\usepackage{verbatim}

\usepackage{amsrefs}

\usepackage[
 colorlinks=true,
 linkcolor=blue,
 citecolor=blue,
 urlcolor=blue,
 pdfencoding=auto,
 psdextra
]{hyperref}

\newtheorem{theorem}{Theorem}[section]
\newtheorem{prop}[theorem]{Proposition}
\newtheorem{lemma}[theorem]{Lemma}
\newtheorem{cor}[theorem]{Corollary}
\newtheorem*{theorem*}{Theorem}

\theoremstyle{definition}
\newtheorem{definition}[theorem]{Definition}

\newtheorem{remark}[theorem]{Remark}

\newtheorem{notation}[theorem]{Notation}

\def \Z {{\mathbb Z}}
\def \R {{\mathbb R}}

\def \C {{\mathbb {C}}}

\def \H {{\mathbb H}}

\begin{document}
\title{Equivariant finite energy proper minimal surfaces in $\mathbb{CH}^2$}

\author[I. Biswas]{Indranil Biswas}
\address{Department of Mathematics, Shiv Nadar University, Dadri 201314, Uttar Pradesh, India.}
\email{indranil.biswas@snu.edu.in, inrdranil29@gmail.com}

\author[P. Kumar]{Pradip Kumar}
\address{Department of Mathematics, Shiv Nadar University, Dadri 201314, Uttar Pradesh, India.}
\email{pradip.kumar@snu.edu.in}

\author[J. Loftin]{John Loftin}
\address{Department of Mathematics and Computer Science, Rutgers-Newark, Newark, NJ 07102, USA}
\email{loftin@rutgers.edu}
\date{}

\subjclass[2020]{20H10, 53C43, 58E20}

\keywords{Complex hyperbolic plane $\mathbb{CH}^2$; conformal harmonic maps;
$\mathrm{PU}(2,1)$-equivariant maps; surface group representations}

\begin{abstract}
Given a noncompact Riemann surface $\Sigma_0\,=\, \Sigma \setminus P$, where $P$ is a finite subset
of a compact connected Riemann surface $\Sigma$,
and a reductive representation \(\rho\,:\,\pi_1(\Sigma_0)\,\longrightarrow\, \mathrm{PU}(2,1)\), we prove
that any finite--energy
\(\rho\)--equivariant conformal minimal immersion is proper around every cusp if and only if the peripheral holonomy
of \(\rho\) is parabolic.
Assuming parabolic peripheral holonomy, we give an explicit parametrization of complete finite--energy immersions
in the mixed case in terms of tame parabolic \(\mathrm{PU}(2,1)\)--Higgs bundles with nilpotent residues
and satisfying concrete parabolic slope inequalities.
We also discuss complete ends and construct explicit families of $\rho$ equivariant proper
\(\mathbb{CH}^2\) \(n\)--noids on \(\mathbb{CP}^1\setminus P\) for \(|P|\,\ge\, 5\).
\end{abstract}
\maketitle

\tableofcontents

\section{Introduction}\label{sec:intro}

The topic of minimal surfaces in the complex hyperbolic plane
\(\mathbb{CH}^2\,=\,\) \(\,\mathrm{SU}(2,1)/\mathrm{S}(\mathrm{U}(2)\times\mathrm{U}(1))\)
lie in the intersection of K\"ahler geometry, representation theory, and harmonic map theory.
For a compact connected Riemann surface \(\Sigma\), the nonabelian Hodge correspondence
\cites{hitchin_1987,Donaldson1987,Corlette1988,Simpson1992}
associates to each reductive representation
\[
\rho\,:\,\pi_1(\Sigma)\,\longrightarrow \,\mathrm{PU}(2,1)
\]
a \(\rho\)--equivariant harmonic map \(f\,:\,\mathbb{H}\,\longrightarrow \, \mathbb{CH}^2\)
and together they produce a polystable \(\mathrm{PU}(2,1)\) Higgs bundle.
In this setting, conformal harmonic maps are precisely the minimal immersions; equivalently,
the Hopf differential is holomorphic and the harmonic map equations take a complex geometric form.
Thus, the geometry of minimal surfaces in \(\mathbb{CH}^2\) can be studied via Higgs bundle data.

A detailed algebro--geometric description in the compact case was developed by Loftin and McIntosh
in \cite{loftin_mcintosh_2013}.
In the mixed case (i.e.,\ \(f\) is neither holomorphic nor anti--holomorphic), the harmonic metric
splits the underlying holomorphic vector bundle $V$ of rank \(3\) as
\[
E\ \cong\ V \oplus \mathcal{O}_\Sigma,
\]
and the Higgs field is off--diagonal with respect to this decomposition
\[
\Phi\ =\ \begin{pmatrix}0&\Phi_1\\ \Phi_2&0\end{pmatrix},
\qquad \mathrm{tr}(\Phi^2)\ =\ 0,
\]
with \(\Phi_1\,\in\, H^0(\Sigma,\,V\otimes K_\Sigma)\) and \(\Phi_2\,\in\, H^0(\Sigma,\,V^*\otimes K_\Sigma)\).
The rank two vector bundle \(V\) fits into a short exact sequence
\begin{equation}\label{eq:LM-extension}
0\ \longrightarrow\ K^{-1}_\Sigma(D_1)\ \xrightarrow{\,\ \Phi_1\ \,}\ V\
\xrightarrow{\,\ \Phi_2\ \,}\ K_\Sigma(-D_2)\ \longrightarrow\ 0,
\end{equation}
where $D_i$ is the zero locus of $\Phi_i$ for $i=1,2$, and \(D_1,\,D_2\) are respectively the effective divisors of anti--complex and complex points of \(f\).
The stability condition gives sharp slope inequalities
governing the existence of a minimal immersion and its associated representation.

\medskip

The aim in this paper is to develop an analogous picture for {punctured} surfaces.
Let \(\Sigma\) be a compact Riemann surface and let \(P\,\subset\, \Sigma\) be a nonempty
finite subset of marked points such that $\chi(\Sigma_0)\, <\, 0$.
We consider reductive representations \(\rho\,:\,\pi_1(\Sigma_0)\,
\longrightarrow\, \mathrm{PU}(2,1)\) together with a
\(\rho\)--equivariant conformal minimal immersion \(f\,:\,\mathbb{H}\, \longrightarrow\, \mathbb{CH}^2\)
of {finite energy} (defined with respect to a fixed complete finite--area hyperbolic metric on \(\Sigma_0\); see
Section~\ref{sec:CompleteCONFORMALMINIMAL}).
On a noncompact surface, the finite--energy requirement is a genuine restriction and interacts subtly with
the peripheral holonomy around punctures. In particular, Sagman's criterion
\cite{Sagman2023} implies that the existence of any finite--energy \(\rho\)--equivariant map forces every
peripheral element to be non-hyperbolic (see Corollary~\ref{cor:peripheral-nonhyperbolic}). We are also able to rule out elliptic peripheral holonomy in this case. 

In Corollary~\ref{cor:proper-iff-complete-parabolic}, we show, under the assumption of parabolic holonomy 
along peripheral loops, we show that the properness of the map $f$ is equivalent to the completeness of the 
induced metric $g_f$.

\medskip

\noindent\textbf{Properness and parabolic peripheral holonomy.}\ 
Our first main result is a precise characterization of when finite--energy minimal immersions have proper
ends: In Theorem~\ref{thm:peripheral-classification}, we prove that if \(f\,:\,\mathbb{H}\, \longrightarrow\,
\mathbb{CH}^2\) is a finite--energy, conformal, \(\rho\)--equivariant minimal immersion, then
\(f\) is proper around every puncture if and only if the conjugacy class \(\rho(c)\) is parabolic
for every peripheral loop \(c\).

\medskip

\noindent\textbf{Parabolic Higgs bundles and an explicit parametrization.}\
When the peripheral holonomy is parabolic, the appropriate algebro--geometric objects are
{tame parabolic Higgs bundles} on \((\Sigma,\, P)\), with logarithmic singularities and strongly
parabolic residues. The tame nonabelian Hodge correspondence on punctured curves
\cites{Simpson1990,Biquard1997,mochizuki2008,BGM2020}
relates such parabolic Higgs bundles to reductive representations with prescribed
parabolic conjugacy classes (see also \cite{GarciaPrada2009} for background).
Starting from a finite--energy conformal minimal immersion with complete ends, we obtain a canonical associated
tame parabolic \(\mathrm{PU}(2,1)\)--Higgs bundle \((E_*,\,\Phi)\)
(Corollary~\ref{cor:parabolic-higgs-from-f}), whose parabolic weights encode the semisimple parts of the
peripheral conjugacy classes (Section~\ref{subsec:weights-residues}).

In the mixed case, \((E_*,\,\Phi)\) admits a holomorphic splitting and an off--diagonal Higgs field mirroring
the compact Loftin--McIntosh structure, now with logarithmic canonical bundle and divisors supported in \(\Sigma_0\).
Our main algebro--geometric criterion (proved in Section~\ref{sec:alg-characterization}) is the following:
Theorem~\ref{thm:mainthm} gives a necessary and sufficient condition for a reductive
\(\rho\,:\,\pi_1(\Sigma_0)\, \longrightarrow\,\mathrm{PU}(2,1)\) with parabolic peripheral holonomy to admit a
\(\rho\)--equivariant conformal minimal immersion \(f\,:\,\mathbb{H}\, \longrightarrow\,\mathbb{CH}^2\) of finite energy with complete ends
in the mixed case.
Crucially, this condition is {explicit}: it is expressed in terms of the existence of tame parabolic Higgs data
together with concrete parabolic slope inequalities (see \eqref{eq:sec7-ineq1}--\eqref{eq:sec7-ineq2}).

\medskip

\noindent\textbf{Complete ends and examples.}
A distinctive feature of the punctured setting is that the ends carry additional algebraic structure.
For complete ends, the Higgs field has nilpotent residues at \(P\), and in the case of rank \(3\) there are only two possible
nilpotent Jordan types. In Section~\ref{sec:parabolic-ends} we exploit this rigidity to discuss complete ends in terms
of the associated residues and weights, yielding a concrete description of the asymptotic behavior at punctures.
At last, we construct explicit families of proper $\rho-$ equivariant complete minimal immersions of \(\mathbb{CP}^1\setminus P\) with
\(n\) punctures (``\(\mathbb{CH}^2\) \(n\)--noids''):
Theorem~\ref{thm:CH2-n-noid} produces complete, finite--energy conformal harmonic maps with cuspidal ends
(unipotent monodromy) from explicit logarithmic Higgs data \eqref{eq:Higgs-data}.

\medskip

\noindent\textbf{Organization of the paper.}
Section~\ref{sec:Preliminaries} recalls the projective model of \(\mathbb{CH}^2\) and basic metric geometry needed later.
In Section~\ref{sec:CompleteCONFORMALMINIMAL} we introduce finite--energy \(\rho\)--equivariant conformal minimal immersions,
and formalize properness and completeness at punctures (Definitions~\ref{def:proper-puncture} and \ref{def:complete-puncture}).
Section~\ref{sec:Isometry-Properness-Completenss} proves the equivalence between properness and parabolic peripheral holonomy
(Theorem~\ref{thm:peripheral-classification}) and relates properness to completeness (Corollary~\ref{cor:proper-iff-complete-parabolic}).
Section~\ref{sec:parabolic-setup} reviews the parabolic Higgs bundle framework, and Section~\ref{sec:parabolic-higgs} explains how a complete
finite--energy minimal immersion determines a tame parabolic \(\mathrm{PU}(2,1)\)--Higgs bundle.
Section~\ref{sec:alg-characterization} proves the main parametrization theorem (Theorem~\ref{thm:mainthm}).
Finally, Section~\ref{sec:parabolic-ends} discusses complete ends via nilpotent residues and constructs the explicit \(n\)--noid examples
(Theorem~\ref{thm:CH2-n-noid}).

\section{Complex hyperbolic Space}\label{sec:Preliminaries}

\subsection{The projective model and Bergman metric}

Let $\C^{2,1}$ denote $\C^{3}$ equipped with the Hermitian form of signature $(2,\,1)$
\begin{equation}\label{eq:Hermitian21}
\big\langle (Z_1,\,Z_2,\,Z_3)^t,\,(W_1,\,W_2,\,W_3)^t\big\rangle_{2,1}
\, \;=\,\;
Z_1\overline{W}_1+Z_2\overline{W}_2-Z_3\overline{W}_3.
\end{equation}
Write $\mathbb P(\C^{2,1})$ for the complex projective space of lines in $\C^{2,1}$;
the point of $\mathbb{P}(\C^{2,1})$
for any nonzero $Z\, \in\, \C^{3}$ will be denoted by $[Z]$.

The {complex hyperbolic plane} is the locus of negative lines for $\langle\cdot,\,\cdot\rangle_{2,1}$:
\begin{equation}\label{eq:CH2proj}
\mathbb{CH}^{2}
\;:=\;
\bigl\{[Z]\,\in\, \mathbb P(\C^{2,1}) \;\big|\;\,\, \langle Z,\,Z\rangle_{2,1}\,<\,0\bigr\}.
\end{equation}
This domain carries a natural $\mathrm{PU}(2,1)$--invariant K\"ahler metric $h$, called the
{Bergman metric}. With this metric, $\mathbb{CH}^2$ is identified with the rank-one Hermitian
symmetric space
\[
\mathbb{CH}^2\, \;\cong\,\; \mathrm{SU}(2,1)/\mathrm{S}(\mathrm{U}(2)\times \mathrm{U}(1)).
\]
We normalize $h$ so that its holomorphic sectional curvature is the constant $-4$.
In particular, $(\mathbb{CH}^2,\,h)$ is complete and K\"ahler--Einstein.

\begin{remark}\label{rem:CH2_hadamard_cat}
With the above normalization (holomorphic sectional curvature \(-4\)), the sectional curvatures of the associated
Riemannian metric \(g\,=\,\mathrm{Re}(h)\) lie in the interval \([-4,\,-1]\).
In particular, \((\mathbb{CH}^2,\,g)\) is a complete simply connected manifold of pinched negative
curvature. Note that this implies that \((\mathbb{CH}^2,\,g)\) is
a \(\mathrm{CAT}(-1)\) complete space. 
\end{remark}

The Hermitian metric $h$ decomposes as
\[
h\, \;=\,\; g-\sqrt{-1}\,\omega,
\]
where $g\,=\,\mathrm{Re}(h)$ as before is the associated Riemannian metric and $\omega$
is the associated K\"ahler form. Equivalently,
\[
\omega(X,\,Y)\ =\ g(JX,\,Y),
\]
where $J$ is the (integrable) almost complex structure on $\mathbb{CH}^2$.

\subsection{Totally geodesic submanifolds and rank}

Let $X\,=\,G/K$ be a Riemannian symmetric space of noncompact type. A {flat}
in $X$ is a complete, totally geodesic, isometrically embedded Euclidean submanifold
$\R^r\subset X$. The {rank} $\mathrm{rk}(X)$ is the maximum among the dimensions of flats in $X$.

For $\mathbb{CH}^2$, the only flats are the geodesic lines; in other words, $\mathrm{rk}(\mathbb{CH}^2)\,=\,1$.
More generally, the proper totally geodesic submanifolds of $\mathbb{CH}^2$ are precisely the following ones:
\begin{itemize}
\item geodesics (real hyperbolic lines),
\item complex geodesics $\mathbb{CH}^1\subset \mathbb{CH}^2$ (complex lines), and
\item totally real, Lagrangian planes ${\R\mathbb H^2}\subset \mathbb{CH}^2$.
\end{itemize}
Among these, only the geodesics are flat.

\subsection{Isometries of $\mathbb{CH}^2$}

Define
\[
\mathrm{U}(2,1)\ =\ \Bigl\{
A\,\in\, \mathrm{GL}_3(\C)\ \Big|\ \, 
\langle A(Z),\,A(W)\rangle_{2,1}\,=\, \langle Z,\, W\rangle_{2,1}\ \ \forall\,\, Z,\,W\,\in\, \C^3
\Bigr\},
\]
and set $\mathrm{SU}(2,1)\,=\,\mathrm{U}(2,1)\cap \mathrm{SL}_3(\C)$. The projective unitary group is
\[
\mathrm{PU}(2,1) \;=\; \mathrm{U}(2,1)/(\text{center})
\;=\; \mathrm{SU}(2,1)/(\Z/3\Z).
\]
The standard linear action of $\mathrm{U}(2,1)$ on $\C^{2,1}$ descends to an action of
$\mathrm{PU}(2,1)$ on $\mathbb P(\C^{2,1})$; this action of $\mathrm{PU}(2,1)$ on ${\mathbb P}
(\C^{2,1})$ preserves the domain $\mathbb{CH}^2$ in \eqref{eq:CH2proj}. The resulting action of
$\mathbb P(\C^{2,1})$ on $\mathbb{CH}^2$ is transitive and consists of holomorphic isometries of
$(\mathbb{CH}^2,\,h)$. Conversely, every holomorphic isometry of $\mathbb{CH}^2$ is given by
the action of some element of $\mathrm{PU}(2,1)$.
The full isometry group (allowing anti-holomorphic isometries) is generated by $\mathrm{PU}(2,1)$
together with the complex conjugation (it is induced by the automorphism $A\, \longmapsto\,
\overline{A}$ of $\mathrm{SU}(2,1)$).

\subsubsection{Classification of elements of $\mathrm{PU}(2,1)$}

Take any $A\,\in\, \mathrm{PU}(2,1)$ and choose a lift $\widetilde A\,\in\, \mathrm{U}(2,1)$.
The classification of the fixed-point locus
for the action of $A$ on $\mathbb{CH}^{2}\cup\partial\mathbb{CH}^{2}$ leads to
a trichotomy (analogous to the real hyperbolic geometry):
\begin{itemize}
\item \textbf{Elliptic:} $A$ fixes at least one point of $\mathbb{CH}^2$.

\item \textbf{Parabolic:} $A$ fixes exactly one point of the ideal boundary
$\partial\mathbb{CH}^2$ and fixes no point in $\mathbb{CH}^2$.

\item \textbf{Loxodromic} (often called {hyperbolic} in the literature):
$A$ fixes exactly two points of the boundary $\partial\mathbb{CH}^2$ and acts by translation
along the unique complex geodesic joining them (possibly with a rotational component). 
\end{itemize}
See \cite{ParkerNotes} for the details.

\subsection{Curvature and CAT$(0)$ features}\label{subsec:CAT}

As mentioned in Remark \ref{rem:CH2_hadamard_cat}, the real sectional
curvatures satisfy
\[
-4 \;\le\; K_{\mathrm{sec}} \;\le\; -1.
\]
In particular, $(\mathbb{CH}^2,\,g)$ is complete, simply connected having negative sectional
curvature. Hence $\mathbb{CH}^2$ is a Hadamard manifold; after rescaling, it is a CAT$(0)$ space,
and in fact a CAT$(-1)$ space (see, e.g., \cite{Sagman2023}).

We will use the following facts:
\begin{enumerate}
\item For any fixed point $o\,\in\, \mathbb{CH}^2$, the function
$p\,\longmapsto\, d(o,\,p)$ is convex along geodesics.

\item Let $u$ be a harmonic map from a Riemannian
manifold $Y$ to $\mathbb{CH}^2$. Then the function $y\, \longmapsto\, d(o,\,u(y))$ on $Y$ is weakly
subharmonic.
\end{enumerate}

\section{Proper minimal immersion with finite energy}\label{sec:CompleteCONFORMALMINIMAL}

\begin{notation}\label{rem:notation}
Throughout, \(\Sigma\) is a compact connected Riemann surface of genus $g$.
Fix $n$ distinct points \[P\ :=\ \{z_1,\,\cdots,\,z_n\}\ \subset\ \Sigma\] of it such that $n\, \geq\, 1$
and
\begin{equation}\label{eg}
2g-2+n\,\; >\, \;0.
\end{equation}
The condition \(2g-2+n\,>\,0\) implies that $\Sigma_0$ is a hyperbolic Riemann surface; in particular,
it admits a complete finite--area hyperbolic metric. We fix such a metric on $\Sigma_0$ and denote it by
$g_\Sigma$; the area form on $\Sigma_0$ for $g_\Sigma$ is denoted by $dA_{g_\Sigma}$.
\end{notation}

Fix a base point $$x_0\ \in\ \Sigma_0.$$
Identify the universal cover of $\Sigma_0$ (corresponding to the base point
$x_0$) with the upper half plane $\mathbb H\, \subset\, {\mathbb C}$; the condition
in \eqref{eg} rules out $\mathbb C$ and ${\mathbb C}{\mathbb P}^1$. Let
\begin{equation}\label{vp}
\varpi\ :\ \mathbb H\ \longrightarrow\ \Sigma_0
\end{equation}
be the covering map. Its deck transformation group is the Galois group
\[
\mathrm{Gal}(\varpi)\,=\,\pi_1(\Sigma_0,\,x_0),
\]
which will be identified with a discrete subgroup of $\mathrm{PSL}(2,\mathbb R)$ via the action of
$\mathrm{PSL}(2,\mathbb R)$ on $\mathbb H$ (see \eqref{vp}).

We will consider pairs of the form $(\rho,\,f)$, where
\begin{enumerate}
\item
\begin{equation}\label{er}
\rho\ :\ \pi_1(\Sigma_0,\,x_0)\ \longrightarrow\ \mathrm{PU}(2,1)
\end{equation}
is a reductive homomorphism, and

\item $f$ is a $\rho$--equivariant conformal minimal immersion of finite energy
\begin{equation}\label{df}
f\ :\ \mathbb H\ \longrightarrow\ \mathbb{CH}^2.
\end{equation}
\end{enumerate}
The $\rho$--equivariance condition means that
\begin{equation}\label{df2}
f(\gamma\cdot p)\;=\;\rho(\gamma)\cdot f(p)
\end{equation}
for all $\gamma\,\in\, \pi_1(\Sigma_0,\,x_0)$ and $p\,\in\, \mathbb H$.

The group $\pi_1(\Sigma_0,\,x_0)$ acts --- via the homomorphism $\rho$ (see \eqref{er}) --- on $\mathbb{CH}^2$.
In view of \eqref{df2}, the map $f$ descends to a map on $\Sigma_0$ with values in the quotient
space \(\mathbb{CH}^2/\rho(\pi_1(\Sigma_0,x_0))\).
Note that the action of $\pi_1(\Sigma_0,\,x_0)$ on $\mathbb{CH}^2$ preserves both
the complex structure and the metric $h$.
Hence the pulled back metric
\begin{equation}\label{gf}
g_f\ :=\ f^*h
\end{equation}
on the universal cover $\mathbb H$ of $\Sigma_0$ is $\pi_1(\Sigma_0,\, x_0)$--invariant.
Consequently, $g_f$ descends to a Riemannian metric on the quotient surface
\(\Sigma_0 \,=\, \mathbb H/\pi_1(\Sigma_0)\); this descended metric on $\Sigma_0$ is also
denoted by $g_f$.

By construction, the Riemannian metric $g_f$ on $\Sigma_0$ lies in the conformal class determined by the 
complex structure on $\Sigma_0$. We say that {$f$ is complete on $\Sigma_0$} if the descended metric $g_f$ on 
$\Sigma_0$ is complete.

\begin{definition}[{Properness around a puncture}]\label{def:proper-puncture}
As in \eqref{vp}, $\varpi\,:\,\mathbb H\,\longrightarrow\, \Sigma_0$ is the universal cover. Let
$f\,:\,\mathbb H\,\longrightarrow\,\mathbb{CH}^2$ be a $\rho$--equivariant map.
Take $p\,\in\, P$ (see Notation \ref{rem:notation}), and let $U^*\,\subset\,\Sigma_0$ be a punctured disk
neighborhood of $p$. We say that $f$ is {proper around $p$} if for some (hence any) base-point
$o\,\in\,\mathbb{CH}^2$ and for every sequence $\{z_k\}^\infty_{k=1}\,\subset\,\mathbb H$ satisfying
$\varpi(z_k)\,\to\, p$ in $\Sigma$, the following holds:
\[
d_{\mathbb{CH}^2}\bigl(f(z_k),\, o\bigr)\ \longrightarrow\ +\infty .
\]
\end{definition}

\begin{definition}[{Completeness around a puncture}]\label{def:complete-puncture}
Let \(f\,:\,{\mathbb H}\,\longrightarrow\, \mathbb{CH}^2\) be a conformal immersion, and let \(g_f\) be the
induced metric on \(\Sigma_0\) (see \eqref{gf} and the sentence following it). We say that \(f\)
is {complete around \(p\,\in\, P\)} (or that the end at \(p\) is complete) (see Notation \ref{rem:notation})
if the Riemannian surface \((U^*,\,g_f\big\vert_{U^*})\) is complete.
\end{definition}

Properness around a puncture $p$ is a condition on the {map} $f$, whereas completeness around $p$ is a 
condition on the {induced (pullback) metric}
\[
g_f\, \;:=\,\; f^{*}h.
\]
In general, completeness of $(U^{*},\,g_f)$ need {not} imply that $f$ is proper on $U^{*}$, even when $g_f$
is a pullback metric. What is always true (and which we shall use) is the following implication:
For pullback metrics, {properness forces completeness} provided the target is complete.
In Corollary \ref{cor:proper-iff-complete-parabolic}, we shall see that these two become equivalent
under a restriction on the monodromy.

\subsection{Type decomposition of $df$}\label{subsec:type-decomp}

Fix a local holomorphic coordinate $z$ on $\Sigma_0$, and set
\(Z\,=\,\partial/\partial z\) and \(\overline{Z}\,=\,\partial/\partial \overline{z}\).
The complexified tangent bundle of $\Sigma_0$ splits as
\(T_\mathbb C\Sigma_0=T^{1,0}\Sigma_0\oplus T^{0,1}\Sigma_0\),
and a similar decomposition holds for the complexified tangent bundle of the target. Using these
decompositions, the differential of $f$ decomposes into type components:
\[
df \;=\; f_z\,dz + f_{\overline z}\,d\overline{z}
\;=\; (\partial f' + \partial f'') + (\overline\partial f' + \overline\partial f''),
\]
where the four components are the bundle maps
\[
\partial f'\ :\ T^{1,0}\Sigma_0\ \longrightarrow\ f^{\ast}T^{1,0}\mathbb{CH}^2,
\qquad
\partial f''\ :\ T^{1,0}\Sigma_0\ \longrightarrow\ f^{\ast}T^{0,1}\mathbb{CH}^2,
\]
\[
\overline{\partial} f'\ :\ T^{0,1}\Sigma_0\ \longrightarrow\ f^{\ast}T^{1,0}\mathbb{CH}^2,
\qquad
\overline{\partial} f''\ :\ T^{0,1}\Sigma_0\ \longrightarrow\ f^{\ast}T^{0,1}\mathbb{CH}^2.
\]

Choose a local unitary frame of $f^{\ast}T\mathbb{CH}^2$ with respect to the K\"ahler structure $h$.
Define the {type--component magnitudes} as follows:
\[
u_1(z)\ :=\ \|\partial f'(Z)\|,
\qquad
u_2(z)\ :=\ \|\partial f''(Z)\|.
\]
In the above notation, the induced metric takes the form
\begin{equation}\label{eq:gf-lambda}
g_f\ =\ \lambda(z)\,|dz|^2,
\qquad
\lambda(z)\ :=\ u_1(z)^2+u_2(z)^2.
\end{equation}

The total energy of $f$ is
\[
E(f)\ :=\ \int_{\Sigma_0} \lambda(z)\,dA_{g_\Sigma}.
\]

In the sequel, we focus on $\rho$--equivariant (see \eqref{er})
minimal immersions that are {proper} and have {finite energy}.

\section{Isometry and properness}\label{sec:Isometry-Properness-Completenss}
We begin with Sagman's theorem; note that in the complex hyperbolic literature, ``hyperbolic'' is often called ``loxodromic''.
(Here we follow \cite{Sagman2023} and use ``hyperbolic'' to mean an axial isometry with positive translation length.)

\begin{prop}[{\cite[Proposition 3.8]{Sagman2023}}]\label{prop:Sagman}
Let \(M\,=\,\widetilde M/\Gamma\) be a complete, finite--volume hyperbolic surface, and let
\((X,\,g)\) be a \(\mathrm{CAT}(-1)\) Hadamard manifold. For a representation
\(\rho_1\,:\,\Gamma\,\longrightarrow\, \mathrm{Iso}(X,g)\), there exists a finite--energy \(\rho_1\)--equivariant map
\(u\,:\,\widetilde M\,\longrightarrow\, X\) if and only if, for every {peripheral} element
\(\gamma\,\in\,\Gamma\) (equivalently, any element conjugate to a cusp subgroup),
the isometry \(\rho_1(\gamma)\) is {not} hyperbolic.
\end{prop}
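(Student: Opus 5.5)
We would prove Sagman's criterion by treating the two directions separately. Both reduce to the geometry of a single cusp. Since $M$ has finitely many cusps, a thick--thin decomposition writes $M$ as a compact core $M_c$ together with finitely many cusp neighbourhoods $C_j\cong S^1\times[1,\infty)$, each carrying the metric $(d\theta^2+dt^2)/t^2$ (up to scaling the angle) and having parabolic stabilizer $\langle\gamma_j\rangle$. The energy density is conformally invariant in dimension two, so on $C_j$ we may compute in the flat coordinates $(\theta,t)$, which we lift to the horoball region $\{\operatorname{Im} z\ge 1\}$ of $\widetilde M\cong\mathbb H$, where $\gamma_j$ acts by $z\mapsto z+1$.

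\textbf{Non-hyperbolic implies a finite-energy map exists.} We build $u$ piece by piece.
\begin{enumerate}
\item On a fundamental domain of the core, take any smooth equivariant map. It can be extended equivariantly because $X$ is contractible, and its energy over the compact region is finite.
\item On each cusp where $A=\rho_1(\gamma_j)$ is elliptic, with fixed point $q$, set $u$ equal to the constant $q$ for large $t$ and interpolate to the core map on a compact collar. This part has zero energy.
\item On each cusp where $A$ is parabolic, choose a geodesic segment $\sigma$ from $x$ to $Ax$ and define $u(\theta+k,t)=A^k\,c_t(\theta)$. Here $c_t$ is a path from $x_t$ to $Ax_t$, and $x_t$ is obtained by pushing $x$ along a geodesic ray toward the fixed point $\xi$ of $A$ at unit speed in the parameter $s=\log t$.
\end{enumerate}
The energy of the parabolic cusp is
\[
\int_1^\infty\!\!\int_0^1 \bigl(|u_\theta|^2+|u_t|^2\bigr)\,d\theta\,dt .
\]
The $\theta$-term is about $\int_1^\infty d(x_t,Ax_t)^2\,dt$. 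For the $t$-term we have $|u_t|\approx 1/t$, so its contribution $\int_1^\infty t^{-2}\,dt$ is finite. The key estimate is that for a parabolic isometry of a $\mathrm{CAT}(-1)$ space, the displacement decays exponentially along a ray toward $\xi$: $d(x_s,Ax_s)\lesssim e^{-s}$. This follows by comparison with horospherical geometry, since the rays from $x$ and $Ax$ toward $\xi$ are asymptotic and converge at rate $e^{-s}$ in $\mathrm{CAT}(-1)$. Choosing $s=\log t$ (or $2\log t$ for extra room), the integrand is bounded by $t^{-2}$, which is integrable.

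\textbf{Finite energy implies no hyperbolic peripheral element.} Suppose $\rho_1(\gamma)$ is hyperbolic with translation length $\ell>0$. For every $t$, the loop $\theta\mapsto u(\theta,t)$ joins a point $y$ to $\rho_1(\gamma)y$, so its length is at least $\ell$. By Cauchy--Schwarz,
\[
\int_0^1 |u_\theta|^2\,d\theta \;\ge\; \ell^2 .
\]
Integrating over $t\in[1,\infty)$ then gives infinite energy, a contradiction. For this lower bound we use the lift of the cusp to the strip $0\le\theta\le1$, $t\ge1$.

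\textbf{Expected main obstacle.} The delicate step is the decay estimate $d(x_s,Ax_s)\to0$ exponentially for parabolic $A$ in a general $\mathrm{CAT}(-1)$ Hadamard manifold, together with making the construction smooth and equivariant. We would establish the decay as follows:
\begin{enumerate}
\item $A$ preserves the Busemann function $b_\xi$ (parabolic isometries fix horospheres), so $A$ maps the ray from $x$ toward $\xi$ to the ray from $Ax$ toward $\xi$, and $x_s$, $Ax_s$ lie on a common horosphere.
\item $\mathrm{CAT}(-1)$ comparison with $\mathbb H^2$ gives that asymptotic rays starting on a common horosphere satisfy $d(x_s,y_s)\le C e^{-s}$.
\end{enumerate}
Smoothness is then obtained by mollifying in $\theta$ or by taking geodesic interpolation, which preserves the energy bounds.
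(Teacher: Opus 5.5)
Your proposal is correct. The paper gives no proof of this proposition of its own; it is quoted from Sagman's Proposition~3.8. Your outline is the standard argument for this criterion: a translation-length lower bound on each horocycle for the ``only if'' direction, and for existence, constant maps at elliptic cusps and geodesic interpolation between $r(\log t)$ and $A\,r(\log t)$ at parabolic cusps.

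Three steps each need one more line when you write it out.

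\begin{enumerate}
\item \textbf{Horosphere invariance.} The claim that a parabolic $A$ preserves the Busemann function $b_\xi$ needs justification. For any isometry fixing $\xi$ one has $b_\xi\circ A=b_\xi+c$, and since $b_\xi$ is $1$-Lipschitz, $|c|\le\inf_x d(x,Ax)$. This infimum is $0$ for parabolic $A$. In a $\mathrm{CAT}(0)$ space it equals the stable translation length. A positive stable translation length in a proper $\mathrm{CAT}(-1)$ space gives two fixed points at infinity. The unique geodesic joining them is then an $A$-invariant axis, which would make $A$ hyperbolic.

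\item \textbf{Exponential convergence.} $\mathrm{CAT}(-1)$ comparison applies only to finite triangles. So run it on $\Delta\bigl(x,Ax,r(T)\bigr)$ and let $T\to\infty$, using $b_\xi(x)=b_\xi(Ax)$ so that the limiting comparison points lie on one horocycle in $\H$. This yields the explicit bound $d\bigl(r(s),Ar(s)\bigr)\le 2\sinh\bigl(d(x,Ax)/2\bigr)e^{-s}$, hence $\int_1^\infty d(x_t,Ax_t)^2\,dt<\infty$.

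\item \textbf{Collar gluing.} The bound $|u_t|\le 1/t$ holds exactly, not just approximately, by convexity of the distance function along the geodesics $c_t$. The parabolic cusp map also needs the same equivariant geodesic collar interpolation to the core map that you describe in the elliptic case.
\end{enumerate}
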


In our setup, the complement \(\Sigma_0\,=\,\Sigma\setminus P\) carries a fixed complete finite--area 
hyperbolic metric, and hence \(\Sigma_0\) is a complete finite--volume hyperbolic surface. Moreover, 
\((\mathbb{CH}^2,\,g)\) with \(g\,=\,\mathrm{Re}(h)\) is a complete simply connected manifold of pinched 
negative curvature; in particular it is a Hadamard \(\mathrm{CAT}(-1)\) space and a complete metric space (see 
Remark~\ref{rem:CH2_hadamard_cat}). Thus Proposition \ref{prop:Sagman} applies with \(M\,=\,\Sigma_0\) and 
\(X\,=\,\mathbb{CH}^2\), and thus yielding the following corollary.

\begin{cor}\label{cor:peripheral-nonhyperbolic}
If there exists a finite--energy \(\rho\)--equivariant (see \eqref{er}) map \(f\,:\,\mathbb H\,
\longrightarrow\, \mathbb{CH}^2\), then for every cusp loop \(c\,\in\,\pi_1(\Sigma_0)\),
the conjugacy class of the element \(\rho(c)\) is non-hyperbolic; equivalently it is either elliptic or parabolic.
\end{cor}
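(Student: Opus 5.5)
The plan is to apply Proposition~\ref{prop:Sagman} directly, in its ``only if'' direction, after matching its hypotheses to our setting. First, take $M\,=\,\Sigma_0$ with the fixed complete finite--area hyperbolic metric $g_\Sigma$ from Notation~\ref{rem:notation}. Then $\widetilde M\,=\,\mathbb H$ and $\Gamma\,=\,\pi_1(\Sigma_0,\,x_0)$, acting by deck transformations via \eqref{vp}; finite area is the same as finite volume in dimension two. Next, take $X\,=\,\mathbb{CH}^2$ with $g\,=\,\mathrm{Re}(h)$. By Remark~\ref{rem:CH2_hadamard_cat} and Section~\ref{subsec:CAT}, this is a complete, simply connected manifold with sectional curvature in $[-4,\,-1]$. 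Hence it is a Hadamard manifold, and since the curvature is bounded above by $-1$, it is $\mathrm{CAT}(-1)$ by the standard comparison theorem.

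Then set $\rho_1\,=\,\rho$, viewed in $\mathrm{Iso}(X,\,g)$ through the inclusion $\mathrm{PU}(2,1)\,\subset\,\mathrm{Iso}(\mathbb{CH}^2,\,g)$. We should check that the energy used in Proposition~\ref{prop:Sagman} agrees with $E(f)$ as defined in Section~\ref{sec:CompleteCONFORMALMINIMAL}, at least up to a harmless constant. Both are the integral over a fundamental domain of $\tfrac12|df|^2$, computed with respect to $g_\Sigma$ and $g$. By conformal invariance of the two-dimensional Dirichlet energy, this equals the integral of the conformal factor $\lambda$ measured against $|dz|^2$. Finiteness is therefore unchanged, whatever normalization convention is used.

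With the hypotheses matched, the ``only if'' direction of Proposition~\ref{prop:Sagman} shows that $\rho(c)$ is not hyperbolic for every peripheral $c$. A cusp loop is exactly such a peripheral element, meaning it is conjugate into a cusp subgroup. The statement is about conjugacy classes, and this is consistent, since conjugation in $\mathrm{PU}(2,1)$ preserves the elliptic/parabolic/loxodromic type.

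Finally, we use the trichotomy for elements of $\mathrm{PU}(2,1)$ recalled above: every element is elliptic, parabolic, or loxodromic. Here ``loxodromic'' coincides with Sagman's ``hyperbolic'', because such an element translates along a geodesic axis by a positive length. So non-hyperbolic means elliptic or parabolic.

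The only real point of care is this terminology matching. We must confirm that Sagman's ``hyperbolic'' (an axial isometry with positive translation length) is precisely the loxodromic class, so that the exclusion leaves exactly the elliptic and parabolic cases. This is immediate from the fixed-point descriptions: elliptic and parabolic elements have zero translation length, while loxodromic ones do not.
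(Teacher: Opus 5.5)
Your proposal is correct and follows the paper's argument: the paper also just applies the ``only if'' direction of Proposition~\ref{prop:Sagman} with $M=\Sigma_0$ (carrying its complete finite-area hyperbolic metric) and $X=(\mathbb{CH}^2,\mathrm{Re}(h))$, justified by the same hypothesis check that this target is a Hadamard $\mathrm{CAT}(-1)$ manifold. Your extra checks on terminology and energy are harmless. In the energy comparison you read the paper's $E(f)$ as the conformally invariant Dirichlet energy $\int\tfrac12|df|^2\,dA_{g_\Sigma}$, rather than literally as $\int\lambda\,dA_{g_\Sigma}$ with $g_f=\lambda|dz|^2$; this is the reading the paper implicitly relies on when it invokes Sagman.
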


We now show that the properness at a cusp excludes the elliptic case.

\subsection{Around a cusp}

Fix a cusp loop \(c\) corresponding to a puncture of \(\Sigma_0\). Choose cusp strip coordinates so that the
action of \(c\) on \(\mathbb{H}\) is the translation \(x\,\longmapsto\, x+2\pi\). For \(Y\,>\,0\), set
\begin{equation}\label{eq:def-cusp-strip}
S_Y\ :=\ \{z\,=\,x+\sqrt{-1}y\in\mathbb{H}\,\,\big\vert\,\ y\,\ge\, Y,\ 0\,\le\, x\,\le\, 2\pi\},\ \ \
ds^{2}\ =\ \frac{dx^{2}+dy^{2}}{y^{2}}.
\end{equation}

As mentioned above, it will be shown that the properness at a cusp rules out the elliptic case
in Corollary \ref{cor:peripheral-nonhyperbolic}.

To prove this by contradiction, assume that the conjugacy class of \(\rho(c)\) is elliptic. Then
\(\rho(c)\) fixes some point \(o \,\in\,\mathbb{CH}^{2}\).
For a map \(f\,:\,\mathbb{H}\,\longrightarrow\, \mathbb{CH}^{2}\), define on \(S_Y\) (see
\eqref{eq:def-cusp-strip}) the function
\begin{equation}\label{eq:def-U}
U(x,\,y)\ :=\ d_h\bigl(o,\,f(x,\,y)\bigr).
\end{equation}
If \(f\) is \(\rho\)--equivariant, and \(\rho(c)(o)\,=\,o\), then \(U\) is \(2\pi\)--periodic in \(x\):
\begin{equation}\label{eq:U-periodic}
U(x+2\pi,\, y)\ =\ U(x,\,y).
\end{equation}
Moreover, since \(\mathbb{CH}^{2}\) is Hadamard and \(f\) is harmonic, the distance function \(U\) is
subharmonic on \(S_Y\).

\begin{lemma}\label{lem:L2-seq}
For any \({\bf g}\,\in\, L^{2}([Y,\,\infty))\) there exists a sequence of real
numbers \(y_n\,\to\, \infty\) such that \({\bf g}(y_n)\,\to\, 0\).
\end{lemma}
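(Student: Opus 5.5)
The natural route is a proof by contradiction that compares with a non-integrable tail. Since \({\bf g}\) is only an element of \(L^{2}([Y,\infty))\), it is defined almost everywhere. So the statement should be read as follows: there is a sequence \(y_n\to\infty\), chosen among points where a fixed representative of \({\bf g}\) is defined, with \({\bf g}(y_n)\to 0\). Equivalently, \(\operatorname{ess\,liminf}_{y\to\infty}|{\bf g}(y)| = 0\).

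Suppose this fails. Then there exist \(\varepsilon>0\) and \(R\ge Y\) such that \(|{\bf g}(y)|\ge\varepsilon\) for almost every \(y\ge R\). If no such \(\varepsilon,R\) existed, then for every \(k\) the set \(\{y\ge k : |{\bf g}(y)|<1/k\}\) would have positive measure. In particular each such set is nonempty, so we could pick \(y_k\) in it. This gives \(y_k\ge k\to\infty\) and \(|{\bf g}(y_k)|<1/k\to 0\), which is exactly the desired sequence.

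Granting the bound, we get
\[
\int_Y^\infty |{\bf g}(y)|^2\,dy\;\ge\;\int_R^\infty \varepsilon^2\,dy\;=\;+\infty,
\]
which contradicts \({\bf g}\in L^2([Y,\infty))\). This finishes the proof.

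The argument itself is elementary. The only point needing care is the measure-theoretic one: points are chosen from sets of positive measure, so the sequence avoids the null set where the chosen representative could be arbitrary. In the intended application, \({\bf g}\) is presumably a continuous function of \(y\), such as an \(x\)-averaged energy density of \(U\) along \(S_Y\). There the conclusion holds pointwise for every representative, and no subtlety arises.
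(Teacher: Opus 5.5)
Your proof is correct and uses the same contradiction argument as the paper: negating the conclusion gives \(|{\bf g}|\ge\varepsilon\) on a tail \([R,\infty)\), which forces \(\int_Y^\infty|{\bf g}|^2\,dy=\infty\). The only difference is your almost-everywhere bookkeeping (choosing each \(y_k\) from a set of positive measure), which the paper omits by arguing pointwise for a fixed representative; both versions are valid, and in the paper's application \(\mathcal{A}(y)\) is continuous in any case.
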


\begin{proof}
If there is no such sequence \(y_n\,\to\, \infty\), then there exist \(\varepsilon\,>\,0\), and \(Y_0
\,\ge\, Y\), such that \(|{\bf g}(y)|\,\ge\, \varepsilon\) for all \(y\,\ge\, Y_0\).
Hence we have
\[
\int_Y^\infty |g(y)|^2\,dy\ \ge\ \int_{Y_0}^\infty \varepsilon^2\,dy\ =\ \infty,
\]
which contradicts the given condition that \(g\,\in\, L^2([Y,\infty))\). In view of this contradiction,
we conclude that there exists a sequence of real numbers \(y_n\,\to\, \infty\) such that
\({\bf g}(y_n)\,\to\, 0\).
\end{proof}

\begin{lemma}\label{lem:UxUy-est}
Take \(U\) defined in \eqref{eq:def-U}. Then, wherever \(U\) is differentiable, the inequalities
\[
|U_x|\ \le\ |f_x|_{h},
\qquad
|U_y|\ \le\ |f_y|_{h}
\]
hold.
\end{lemma}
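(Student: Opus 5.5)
The plan is to reduce the estimate to the fact that the distance function $d_o(\cdot)\,:=\,d_h(o,\,\cdot)$ on $\mathbb{CH}^2$ is $1$--Lipschitz. Then the chain rule does the rest. Note that $U\,=\,d_o\circ f$, where $f$ is viewed as a function of the real coordinates $(x,\,y)$ on $S_Y$.

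The first step is to establish the Lipschitz property. By the triangle inequality, $|d_o(p)-d_o(q)|\,\le\, d_h(p,\,q)$ for all $p,\,q$. Since $(\mathbb{CH}^2,\,g)$ is a Hadamard manifold (Remark~\ref{rem:CH2_hadamard_cat}), $d_o$ is smooth on $\mathbb{CH}^2\setminus\{o\}$. Its gradient there is the unit radial vector field, so $|\nabla d_o|_g\,=\,1$.

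The second step is the difference-quotient argument, which avoids needing differentiability of $d_o$ itself. Fix $(x,\,y)$ where $U$ is differentiable. Consider the curve $t\,\longmapsto\, f(x+t,\,y)$. Its $h$--length over $[0,\,t]$ is $\int_0^t |f_x(x+s,\,y)|_h\,ds$, and this is at least $d_h\bigl(f(x,\,y),\,f(x+t,\,y)\bigr)$. Therefore
\[
\frac{|U(x+t,\,y)-U(x,\,y)|}{|t|}\ \le\ \frac{1}{|t|}\Bigl|\int_0^t |f_x(x+s,\,y)|_h\,ds\Bigr| .
\]
Letting $t\,\to\, 0$ and using the continuity of $f_x$ (since $f$ is smooth, being harmonic) gives $|U_x|\,\le\, |f_x|_h$. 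The same argument in the $y$ direction gives $|U_y|\,\le\, |f_y|_h$.

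Two clarifications complete the argument. First, $|\cdot|_h$ denotes the norm induced by $g\,=\,\mathrm{Re}(h)$, which is the one appearing in lengths of curves. Second, at points where $f(x,\,y)\,=\,o$, the function $U$ may fail to be differentiable. This is why the statement is restricted to points where $U$ is differentiable, and the difference-quotient argument needs no smoothness of $d_o$ at such points anyway.

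There is no real obstacle here; the only care needed is in the second step, namely in doing the limit of difference quotients rather than invoking the chain rule at the point $o$.
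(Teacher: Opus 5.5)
Your proposal is correct and is essentially the paper's argument: both apply the $1$--Lipschitz property of $d_h(o,\cdot)$ to the difference quotients of $U$ and then pass to the limit. Your extra step of bounding $d_h\bigl(f(x,y),\,f(x+t,y)\bigr)$ by the length $\int_0^t |f_x(x+s,y)|_h\,ds$ of the coordinate curve justifies the limit, which the paper takes without comment; your remark on the smoothness of $d_o$ away from $o$ is not needed.
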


\begin{proof}
We prove the estimate for \(U_x\); the proof for \(U_y\) is identical. For \(h\,\neq\, 0\),
\[
\left|\frac{U(x+h,y)-U(x,y)}{k}\right|\ =\ 
\left|\frac{d_h(o,f(x+h,y))-d_h(o,f(x,y))}{k}\right|.
\]
Since the distance function is \(1\)-Lipschitz, we have
\(|d_h(o,P)-d_h(o,Q)|\,\le\, d_h(P,Q)\), and therefore
\[
\left|\frac{U(x+h,y)-U(x,y)}{k}\right|\ \le\
\frac{d_h\bigl(f(x+h,y),f(x,y)\bigr)}{|k|}.
\]
Letting \(k\,\to\, 0\) it follows that \(|U_x|\,\le\, |f_x|_h\).
\end{proof}

\begin{lemma}\label{lem:cusp-control}
Let \(U\,\in\, C^{2}(S_Y)\), where $S_Y$ is defined in \eqref{eq:def-cusp-strip}, be subharmonic
and \(2\pi\)-periodic in \(x\). Assume that
\begin{equation}\label{eq:energy-U-assump}
\int_{Y}^{\infty}\int_{0}^{2\pi}\bigl(|U_x|^{2}+|U_y|^{2}\bigr)\,dx\,dy\ <\ \infty.
\end{equation}
Denote
\begin{equation}\label{eq:def-m}
m(y)\ :=\ \frac{1}{2\pi}\int_{0}^{2\pi}U(x,y)\,dx,
\qquad
\mathcal{A}(y)\ :=\ \left(\int_{0}^{2\pi}U_x(x,y)^2\,dx\right)^{1/2}.
\end{equation}
Then the following two hold:
\begin{enumerate}
\item the function \(m\) is bounded above on \([Y,\, \infty)\).
\item for each \(y\,\ge\, Y\),
\[U(0,y)\ \le\ \sup_{t\ge Y}m(t)+\sqrt{2\pi}\,\mathcal{A}(y).\]
\end{enumerate}
\end{lemma}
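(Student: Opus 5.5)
The approach is to exploit subharmonicity through the average $m$, and to get (2) from a one-dimensional Sobolev/Cauchy--Schwarz estimate on each horizontal circle.

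\textbf{Part (1).} Since $U$ is $C^2$ and $2\pi$-periodic in $x$, differentiating under the integral gives
\[
m''(y)=\frac{1}{2\pi}\int_0^{2\pi}U_{yy}\,dx=\frac{1}{2\pi}\int_0^{2\pi}\Delta U\,dx-\frac{1}{2\pi}\int_0^{2\pi}U_{xx}\,dx .
\]
The last integral vanishes by periodicity, and $\Delta U\ge 0$ by subharmonicity. Hence $m''\ge 0$, so $m$ is convex on $[Y,\infty)$.

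Next I use the energy bound. By Cauchy--Schwarz,
\[
|m'(y)|^2\le \frac{1}{2\pi}\int_0^{2\pi}U_y^2\,dx,
\]
so \eqref{eq:energy-U-assump} gives $m'\in L^2([Y,\infty))$. By Lemma~\ref{lem:L2-seq} there are $y_n\to\infty$ with $m'(y_n)\to 0$.

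Since $m$ is convex, $m'$ is nondecreasing. If $m'(y_0)=\delta>0$ for some $y_0$, then $m'\ge\delta$ on $[y_0,\infty)$, which contradicts $m'(y_n)\to 0$. Therefore $m'\le 0$ everywhere, $m$ is nonincreasing, and
\[
\sup_{t\ge Y}m(t)=m(Y)<\infty .
\]
This proves (1).

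\textbf{Part (2).} Fix $y\ge Y$. By continuity of $x\mapsto U(x,y)$ on $[0,2\pi]$, the mean value theorem for integrals gives some $x_0\in[0,2\pi]$ with $U(x_0,y)=m(y)$. The fundamental theorem of calculus and Cauchy--Schwarz then give
\[
U(0,y)=U(x_0,y)-\int_0^{x_0}U_x(x,y)\,dx\le m(y)+\sqrt{x_0}\,\Bigl(\int_0^{2\pi}U_x^2\,dx\Bigr)^{1/2}\le \sup_{t\ge Y}m(t)+\sqrt{2\pi}\,\mathcal{A}(y).
\]

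\textbf{Main obstacle.} The only delicate step is ruling out that the convex function $m$ is unbounded above. The $L^2$ bound on $m'$, combined with Lemma~\ref{lem:L2-seq} and the monotonicity of $m'$, is what closes this. Regularity is not an issue because $U\in C^2$ is assumed.
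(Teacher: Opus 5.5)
Your proof is correct and follows the paper's argument essentially step for step. Part (1) goes through convexity of $m$ from subharmonicity and periodicity, then $m'\in L^2$ via Cauchy--Schwarz, which forces $m'\le 0$; part (2) picks a point $x_0$ where $U(\cdot,y)$ equals its mean and applies the fundamental theorem of calculus with Cauchy--Schwarz. The only cosmetic difference is that you route the contradiction in (1) through Lemma~\ref{lem:L2-seq}, whereas the paper uses directly that $\int_{y_0}^\infty |m'|^2\,dy=\infty$ once $m'\ge m'(y_0)>0$.
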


\begin{proof}
First we shall show that \(m\) is convex. Since \(U\) is \(2\pi\)-periodic in \(x\), it follows that
\(\int_0^{2\pi}U_{xx}(x,y)\,dx\,=\, 0\). Using subharmonicity \(\Delta U\,=\,U_{xx}+U_{yy}\,\ge\, 0\), we compute
\[
m''(y)\ =\
\frac{1}{2\pi}\int_{0}^{2\pi}U_{yy}(x,y)\,dx
\ =\ \frac{1}{2\pi}\int_{0}^{2\pi}(U_{xx}+U_{yy})\,dx-\frac{1}{2\pi}\int_{0}^{2\pi}U_{xx}\,dx
\ \ge\ 0,
\]
so \(m\) is convex on \([Y,\,\infty)\).

Next,
\[
m'(y)\ =\ \frac{1}{2\pi}\int_0^{2\pi}U_y(x,y)\,dx,
\]
hence, by Cauchy--Schwarz inequality,
\[
|m'(y)|^2\ \le\ \frac{1}{2\pi}\int_0^{2\pi}|U_y(x,y)|^2\,dx.
\]
Integrating over \(y\,\in\,[Y,\,\infty)\) and using \eqref{eq:energy-U-assump} it follows
that \(m'\,\in\, L^2([Y,\,\infty))\). If \(m'(y_0)\,>\,0\) for some \(y_0\,\ge\, Y\), then convexity
implies that \(m'(y)\,\ge\, m'(y_0)\,>\,0\) for all \(y\,\ge\, y_0\), which contradicts the above
observation that \(m'\,\in\, L^2([Y,\infty))\). Consequently, we have \(m'(y)\,\le\, 0\) for all
\(y\,\ge\, Y\), so \(m\) is non-increasing and hence it is bounded above on \([Y,\,\infty)\). This proves (1).

To prove (2), fix \(y\,\ge\, Y\) and set \(v(x)\,:=\,U(x,y)-m(y)\). Then \[\int_0^{2\pi}v(x)\,dx\ =\ 0\]
and \(v'(x)\,=\,U_x(x,\,y)\). Since \(v\) is continuous with mean zero, there exists \(x_0\,\in\,[0,\,2\pi]\)
such that \(v(x_0)\,=\,0\). For any \(x\),
\[
|v(x)|\,=\,\left|\int_{x_0}^{x}v'(t)\,dt\right|\,\le\, \int_0^{2\pi}|v'(t)|\,dt \,\le\,
\sqrt{2\pi}\left(\int_0^{2\pi}|v'(t)|^2\,dt\right)^{1/2} \,=\,\sqrt{2\pi}\,\mathcal{A}(y).
\]
Thus we have
\[
U(0,\,y)\ =\ m(y)+v(0)\ \le\ \sup_{t\ge Y}m(t)+\sqrt{2\pi}\,\mathcal{A}(y).
\]
This completes the proof.
\end{proof}

\subsection{Properness enforces parabolic peripheral holonomy}

\begin{prop}\label{prop:proper-implies-parabolic}
Let \(f\,:\,\mathbb{H}\,\longrightarrow\, \mathbb{CH}^{2}\) be a finite--energy, conformal,
\(\rho\)--equivariant (see \eqref{er}) harmonic immersion, and let \(c\) be a cusp loop.
If \(f\) is proper around the corresponding puncture, then the conjugacy class of
\(\rho(c)\) in $\mathrm{PU}(2,1)$ is parabolic.
\end{prop}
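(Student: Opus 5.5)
By Corollary~\ref{cor:peripheral-nonhyperbolic}, finite energy already forces \(\rho(c)\) to be elliptic or parabolic. It therefore suffices to rule out the elliptic case, and I would do this by contradiction, using the cusp machinery of Lemmas~\ref{lem:L2-seq}--\ref{lem:cusp-control}.

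Suppose \(\rho(c)\) is elliptic, and fix a point \(o\in\mathbb{CH}^2\) with \(\rho(c)(o)=o\). Work in the cusp strip \(S_Y\) of \eqref{eq:def-cusp-strip}, where \(c\) acts by \(x\mapsto x+2\pi\), and set \(U(x,y)=d_h(o,f(x,y))\) as in \eqref{eq:def-U}. By \eqref{eq:U-periodic} the function \(U\) is \(2\pi\)-periodic in \(x\). It is subharmonic because \(f\) is harmonic and \(\mathbb{CH}^2\) is Hadamard (Section~\ref{subsec:CAT}).

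The next step is to verify the hypothesis \eqref{eq:energy-U-assump}. The Dirichlet energy in dimension two is conformally invariant, so \(\int_{S_Y}(|f_x|_h^2+|f_y|_h^2)\,dx\,dy\) is the energy of \(f\) on the cusp neighbourhood. This is finite by the finite-energy assumption; note that \(\lambda\,dA_{g_\Sigma}\) is the conformally invariant energy density, up to the conventions of \eqref{eq:gf-lambda}. Lemma~\ref{lem:UxUy-est} then gives \(|U_x|^2+|U_y|^2\le|f_x|_h^2+|f_y|_h^2\) almost everywhere, which yields \eqref{eq:energy-U-assump}.

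Regularity needs a little care. Since \(o\) need not lie off the image, \(U\) is only Lipschitz and not \(C^2\). I would handle this in one of two ways. The first option is to replace \(U\) by the smooth subharmonic function \(\sqrt{1+U^2}\) (or by \(\cosh U\), or \(U^2\), which are smooth and subharmonic on a Hadamard target). Its gradient is controlled by \(|\nabla U|\) times a factor that is harmless where \(U\) stays bounded along the chosen sequence. The second option is to run the convexity argument for \(m(y)\) in the weak sense, which requires only \(W^{1,2}\) subharmonicity. I expect this technical point to be the main obstacle. Choosing \(U^2/2\) is natural for smoothness, but then the gradient bound \(|\nabla(U^2)|\le 2U|\nabla f|\) is no longer square integrable a priori. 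For that reason I would prefer to work with the weak form of Lemma~\ref{lem:cusp-control} applied directly to \(U\). The convexity of \(m\) and the mean-value estimate in part (2) both survive for Lipschitz subharmonic functions after mollifying in \(x\). Mollifying in \(x\) preserves both periodicity and subharmonicity.

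Granting Lemma~\ref{lem:cusp-control}, the function \(m\) is bounded above by some constant \(M\). Moreover,
\[
\int_Y^\infty\mathcal{A}(y)^2\,dy\le\int_{S_Y}|U_x|^2<\infty,
\]
so \(\mathcal{A}\in L^2([Y,\infty))\). Lemma~\ref{lem:L2-seq} therefore produces a sequence \(y_n\to\infty\) with \(\mathcal{A}(y_n)\to0\). Part (2) of Lemma~\ref{lem:cusp-control} then gives
\[
U(0,y_n)\le M+\sqrt{2\pi}\,\mathcal{A}(y_n),
\]
which is bounded. However, the points \(z_n=\sqrt{-1}\,y_n\) satisfy \(\varpi(z_n)\to p\), so properness around \(p\) (Definition~\ref{def:proper-puncture}) forces \(U(0,y_n)=d(f(z_n),o)\to\infty\). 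This contradiction rules out the elliptic case, and therefore \(\rho(c)\) is parabolic.
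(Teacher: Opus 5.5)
Your proposal is correct and follows the paper's proof essentially step for step. Both rule out elliptic holonomy via $U=d_h(o,f)$ on the cusp strip, use Lemma~\ref{lem:UxUy-est} and finite energy to get $\mathcal{A}\in L^2$, choose $y_n$ by Lemma~\ref{lem:L2-seq}, and contradict properness with the bound from Lemma~\ref{lem:cusp-control}; the only divergence is regularity, which the paper settles more simply by using properness to enlarge $Y$ so that $f(S_Y)$ misses $\overline{B_h(o,1)}$, making $U$ smooth on $S_Y$ and your weak-subharmonicity/mollification detour unnecessary.
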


\begin{proof}
By Corollary~\ref{cor:peripheral-nonhyperbolic}, the element \(\rho(c)\) is elliptic or parabolic.
To prove by contradiction, assume that \(A\,:=\,\rho(c)\) is elliptic. Choose a fixed point
\(o\,\in\, \mathbb{CH}^{2}\) for the action of \(A\),
and work on the cusp strip \(S_Y\) defined in \eqref{eq:def-cusp-strip}.

Since \(E(f)\,<\,\infty\), we have, in particular,
\begin{equation}\label{eq:energy-fx}
\int_{Y}^{\infty}\int_{0}^{2\pi}\lvert f_{x}\rvert_{h}^{2}\,dx\,dy\ <\ \infty.
\end{equation}
Consider \(U\) defined in \eqref{eq:def-U}. After increasing \(Y\) --- if necessary ---- the properness
condition of $f$ implies that
\(f(S_Y)\cap \overline{B_h(o,1)}\,=\, \emptyset\), and hence \(U\) is smooth on \(S_Y\). As noted above, \(U\) is
subharmonic on \(S_Y\) and \(2\pi\)--periodic in \(x\) (see \eqref{eq:U-periodic} and the sentence
following it). Moreover, Lemma~\ref{lem:UxUy-est} gives that
\[
|U_x|^2+|U_y|^2 \ \le\ |f_x|_h^2+|f_y|_h^2,
\]
and therefore
\begin{equation}\label{eq:energy-U}
\int_{Y}^{\infty}\int_{0}^{2\pi}\bigl(|U_x|^2+|U_y|^2\bigr)\,dx\,dy\ <\ \infty.
\end{equation}

Consider \(\mathcal{A}(y)\) defined in Lemma~\ref{lem:cusp-control}. By Fubini's theorem
and \eqref{eq:energy-U}, we have \(\mathcal{A}\,\in\, L^2([Y,\,\infty))\). Applying
Lemma~\ref{lem:L2-seq}, choose \(y_n\,\to\, \infty\) such that
\begin{equation}\label{eq:A-to-0}
\mathcal{A}(y_n)\ \to\ 0.
\end{equation}

Since \(f\) is proper around the puncture, the sequence \((0,\,y_n)\) leaves every compact subset of
$S_Y$ (defined in \eqref{eq:def-cusp-strip}), hence \(f(0,\,y_n)\)
leaves every compact subset of \(\mathbb{CH}^2\). In particular, we have
\begin{equation}\label{eq:U-to-infty}
U(0,y_n)\ =\ d_h\bigl(o,f(0,y_n)\bigr)\ \to\ +\infty.
\end{equation}

On the other hand, Lemma~\ref{lem:cusp-control} implies that
\begin{equation}\label{a1}
U(0,y_n)\ \le\ \sup_{t\ge Y}\, m(t)+\sqrt{2\pi}\,\mathcal{A}(y_n).
\end{equation}
The function \(m\) is bounded above and \(\mathcal{A}(y_n)\,\to\, 0\), so the right-hand side
of \eqref{a1} is bounded
uniformly in \(n\).
This contradicts \eqref{eq:U-to-infty}. Therefore, we conclude that \(\rho(c)\) is not elliptic, and
consequently it is parabolic.
\end{proof}

We now prove the converse which says that parabolic peripheral holonomy enforces properness.

\begin{prop}\label{prop:cusp-complete}
Let \(f\,:\,\mathbb{H}\,\longrightarrow\, \mathbb{CH}^{2}\) be a finite--energy, \(\rho\)--equivariant
conformal harmonic map. If \(\rho(c)\) is parabolic for a cusp loop \(c\,\in\,\Gamma\),
then \(f\) is proper around the corresponding puncture.
\end{prop}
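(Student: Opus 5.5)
The plan is to show that along the cusp the map $f$ becomes almost invariant under $\rho(c)$, uniformly in the horizontal variable, and then to use that a parabolic isometry has strictly positive displacement on every compact subset of $\mathbb{CH}^2$. I work in the cusp strip $S_Y$ of \eqref{eq:def-cusp-strip}, where $c$ acts by $x\mapsto x+2\pi$, and write $A:=\rho(c)$.

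\textbf{Step 1 (energy in flat coordinates).} The Dirichlet energy of a map from a surface is conformally invariant. Hence finite energy gives
$$\int_Y^\infty\!\int_0^{2\pi}\bigl(|f_x|_h^2+|f_y|_h^2\bigr)\,dx\,dy<\infty,$$
where the integrand is computed with respect to the flat metric $dx^2+dy^2$ (compare \eqref{eq:energy-fx}). By periodicity the same bound holds on the full half-strip $\{y\ge Y\}\times\mathbb R$, up to a factor counting overlapping translates. Consequently the flat energy $\int_{D(z,1)}|df|^2$ over the Euclidean unit disc $D(z,1)$ tends to $0$ as $\operatorname{Im} z\to\infty$, uniformly in $\operatorname{Re} z$.

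\textbf{Step 2 (pointwise decay of $|df|$).} For a harmonic map from a flat domain into a target of nonpositive curvature, the Bochner formula gives $\Delta |df|^2\ge 2|\nabla df|^2\ge0$. So $|df|^2$ is subharmonic for the flat Laplacian, and the mean value inequality yields
$$|df|^2(z)\le \frac1\pi\int_{D(z,1)}|df|^2 .$$
With Step 1 this gives $\sup_{x}|f_x|_h(x,y)\to0$ as $y\to\infty$. I expect this to be the main technical point: one must verify the Bochner inequality in the conformal flat coordinates, which is standard since the domain Ricci term vanishes and the target curvature term has the right sign by Remark~\ref{rem:CH2_hadamard_cat}.

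\textbf{Step 3 (displacement).} By equivariance $f(x+2\pi,y)=Af(x,y)$, so
$$d_h\bigl(f(z),Af(z)\bigr)\le \int_x^{x+2\pi}|f_x|_h(t,y)\,dt\le 2\pi\sup_t|f_x|_h(t,y)=:\delta(y),$$
and $\delta(y)\to0$ by Step 2. Since $A$ is parabolic it has no fixed point in $\mathbb{CH}^2$. The displacement $p\mapsto d(p,Ap)$ is continuous, so on any compact $K\subset\mathbb{CH}^2$ it has a positive minimum $\mu_K$. This argument does not need the displacement to tend to $0$ at the fixed point, so it also covers ellipto-parabolic $A$. Choose $Y_K$ with $\delta(y)<\mu_K$ for $y\ge Y_K$; then $f(z)\notin K$ whenever $\operatorname{Im} z\ge Y_K$. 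Taking $K=\overline{B_h(o,R)}$ for arbitrary $R$ gives $d_h(f(z),o)\to\infty$ uniformly as $z$ goes up the cusp strip.

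\textbf{Step 4 (passage to Definition~\ref{def:proper-puncture}).} A sequence $z_k$ with $\varpi(z_k)\to p$ can be moved, by deck transformations, into the standard cusp region $\{\operatorname{Im} z\ge Y\}$ for the fixed lift of the cusp. There Step 3 applies, and the replacement changes $f$ only by the isometries $\rho(\gamma)$, with the cusp loop conjugated accordingly. In this sense, which is the one used in Proposition~\ref{prop:proper-implies-parabolic}, where the argument also runs on the strip $S_Y$, the conclusion is properness of $f$ around the puncture.
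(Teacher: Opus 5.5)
Your argument is correct, and it takes a genuinely different route from the paper.

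\textbf{The two routes.} The paper argues by contradiction. It applies a removable-singularity theorem for finite-energy harmonic maps to extend $f$ continuously across the puncture, and then reads off a fixed point of $\rho(c)$ in $\mathbb{CH}^2$. You argue directly:
\begin{enumerate}
\item The Bochner inequality $\Delta\lvert df\rvert^2\ge 0$ on the flat cusp strip, together with the mean-value inequality, turns the vanishing tail energy into $\sup_x\lvert df\rvert(x,y)\to 0$.
\item Hence the horocycle through $z$ is mapped to a curve of length at most $\delta(y)\to0$ joining $f(z)$ to $\rho(c)f(z)$.
\item The displacement of a fixed-point-free isometry has a positive minimum on each compact set, so $f(z)$ leaves every ball, uniformly in $x$.
\end{enumerate}

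\textbf{What your route buys.} It is self-contained and quantitative, and it works verbatim for any Hadamard target. The same estimate also excludes hyperbolic $\rho(c)$ for harmonic finite-energy maps, since then the displacement is bounded below by the translation length.

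It also sidesteps a step in the paper's route that does not work as written. On a component $\Omega$ of $\varpi^{-1}(U^*)$, the map $f$ is only $\rho(c)$-equivariant. So it does not descend to a $\mathbb{CH}^2$-valued map on $U^*=\Omega/\langle c\rangle$ unless $\rho(c)$ is trivial, and the removable-singularity theorem cannot be applied to it. If it could, every finite-energy equivariant map would have elliptic peripheral holonomy. That already fails for a Fuchsian cusp mapped onto a complex geodesic $\mathbb{CH}^1\subset\mathbb{CH}^2$. Repairing that route essentially requires the uniform decay of $\lvert df\rvert$ proved in your Step~2.

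\textbf{Two minor wording points.}
\begin{enumerate}
\item In Step~1, the energy over the whole half-strip $\{y\ge Y\}\times\R$ is infinite. What periodicity of $\lvert df\rvert^2$ in $x$ gives, and all you use, is that the energy on $D(z,1)$ is at most that of the slab $[0,2\pi]\times[y-1,y+1]$, which tends to $0$.
\item In Step~4, moving $z_k$ to $w_k=\gamma_k^{-1}z_k$ in the standard horoball turns $d_h(f(z_k),o)$ into $d_h\bigl(f(w_k),\rho(\gamma_k)^{-1}o\bigr)$. This is guaranteed to diverge only when the $\gamma_k$ lie in finitely many cosets $\gamma\langle c\rangle$. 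For arbitrary sequences the literal Definition~\ref{def:proper-puncture} can fail: if $\rho$ has dense image, one can choose $\gamma_k$ with $\rho(\gamma_k)^{-1}o$ within distance $1$ of $f(w_k)$. So state your conclusion as properness along each lift of the punctured disc, which is the sense your Step~3 gives and the paper's proofs use, rather than suggesting every sequence reduces to the standard cusp.
\end{enumerate}
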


\begin{proof}
To prove by contradiction, assume that \(f\) is not proper around the puncture corresponding to
the cusp loop \(c\). As in \eqref{vp},
\[
\varpi\ :\ \mathbb{H}\ \longrightarrow\, \Sigma_0
\]
is the universal covering map. The assumption that $f$ is not proper means that there exists a compact subset
\(K\,\subset\, \mathbb{CH}^{2}\), and a sequence \(p_k\,\in\, \mathbb{H}\), such that
\[
\varpi(p_k)\ \longrightarrow\ z_i \quad\text{ in }\,\Sigma,
\ \ \, \text{ and }\ \ \,
f(p_k)\,\in\, K\, \ \ \forall\ \, k.
\]
Passing to a subsequence, we may assume that
\begin{equation}\label{eq:fk-limit}
f(p_k)\ \longrightarrow\ p_\infty\ \in\ \mathbb{CH}^{2}.
\end{equation}

Choose a holomorphic coordinate function \(\zeta\) on \(\Sigma\) centered at \(z_i\), so that \(\zeta(z_i)
\,=\,0\).
For sufficiently small \(\varepsilon\,>\,0\), the punctured neighborhood
\[
U^{*}\ :=\ \{\zeta\,\in\,\mathbb{C}\,\, \big\vert\, \ 0\,<\,|\zeta|\,<\,\varepsilon\}\ \subset\ \Sigma_0
\]
is a punctured disk about \(z_i\). Let \(\Omega\) be a connected component of \(\varpi^{-1}(U^{*})\). Then
\(\varpi\,:\,\Omega\,\longrightarrow\, U^{*}\) is a covering map whose deck transformation
group is generated by the cusp loop \(c\).

Since \(f\) has finite energy on \(\Sigma_0\), its restriction to \(\Omega\) has finite energy on \(U^{*}\).
Consequently, the removable--singularity theorem for finite--energy harmonic maps into Hadamard targets
\cite{schoen1982regularity} gives a continuous harmonic extension across the puncture
\[
\overline f\ :\ \ U\ \longrightarrow\ \mathbb{CH}^{2},
\]
where \(U\ =\ \{\zeta\,\in\,\mathbb{C}\,\,\big\vert\,\,\, |\zeta|<\varepsilon\}\), satisfying the condition
that \(\overline f|_{U^{*}}\) agrees with \(f\)
(after identifying \(U^{*}\) with \(\Omega/\langle c\rangle\)). Moreover, the limit in \eqref{eq:fk-limit}
satisfies the condition
\begin{equation}\label{eq:fbar-value}
\overline f(z_i)\ =\ \overline f(0)\ =\ p_\infty.
\end{equation}

Finally, using equivariance,
\[
f(c\cdot q)\ =\ \rho(c)\cdot f(q)\qquad\text{for all }\,\, q\ \in\ \Omega.
\]
Choose a sequence \(q_m\,\in\, \Omega\) with \(\varpi(q_m)\,\to\, z_i\). Then we also have
\(\varpi(c\cdot q_m)\,\to\, z_i\), and by the continuity of \(\overline f\) at the puncture,
\[
f(q_m)\ \longrightarrow\ \overline f(z_i)\ =\ p_\infty,
\qquad
f(c\cdot q_m)\ \longrightarrow\ \overline f(z_i)\ =\ p_\infty.
\]
Since \(f(c\cdot q_m)\,=\,\rho(c)\,f(q_m)\), taking limits yields \(\rho(c)\,p_\infty\,=
\, p_\infty\). Thus \(\rho(c)\)
fixes a point of \(\mathbb{CH}^{2}\), and hence \(\rho(c)\) is elliptic. But this contradicts the hypothesis
that \(\rho(c)\) is parabolic. Therefore, it follows that \(f\) is proper around the puncture.
\end{proof}

The combination of Proposition~\ref{prop:proper-implies-parabolic} and Proposition~\ref{prop:cusp-complete}
yields the following theorem.

\begin{theorem}\label{thm:peripheral-classification}
Let \(f\,:\,\mathbb{H}\,\longrightarrow\, \mathbb{CH}^{2}\) be a finite--energy, conformal,
\(\rho\)--equivariant minimal immersion. Then \(f\) is proper around every cusp if and only if
for every peripheral loop \(c\,\in\, \pi_1(\Sigma_0)\), the holonomy \(\rho(c)\) is parabolic.
\end{theorem}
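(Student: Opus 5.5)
The theorem follows by combining the two propositions just proved, applied puncture by puncture. Fix the finite--energy, conformal, \(\rho\)--equivariant minimal immersion \(f\). Being a conformal minimal immersion, \(f\) is in particular a conformal harmonic map, so both Proposition~\ref{prop:proper-implies-parabolic} and Proposition~\ref{prop:cusp-complete} apply to it. Every peripheral loop \(c\in\pi_1(\Sigma_0)\) is conjugate to a power of a standard cusp loop \(c_i\) around some puncture \(z_i\in P\). Parabolicity of the conjugacy class is invariant under conjugation, and properness around \(z_i\) is likewise independent of which lift of the punctured disk we use, since the lifts are related by deck transformations and \(\rho\) acts by isometries. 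So it suffices to prove the equivalence one puncture at a time, for the standard generator \(c_i\).

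For the forward direction, assume \(f\) is proper around every cusp. Applying Proposition~\ref{prop:proper-implies-parabolic} at each \(z_i\) shows that \(\rho(c_i)\) is parabolic. This argument first uses Corollary~\ref{cor:peripheral-nonhyperbolic} to exclude the hyperbolic case via finite energy. It then excludes the elliptic case using the subharmonic distance function \(U\) and Lemma~\ref{lem:cusp-control}.

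Peripheral loops that are nontrivial powers \(c_i^k\) need a separate check: a nontrivial power of a parabolic element of \(\mathrm{PU}(2,1)\) is still parabolic. It cannot fix an interior point, because then \(\langle \rho(c_i)\rangle\) would have a bounded orbit and \(\rho(c_i)\) would itself fix a point (its circumcenter, by CAT\((0)\)), a contradiction. It is also not loxodromic, because its translation length is \(|k|\) times that of \(\rho(c_i)\), which is zero. Alternatively, one can simply read ``peripheral loop'' as the primitive cusp loops, as in the propositions.

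For the converse, assume every peripheral holonomy is parabolic. Proposition~\ref{prop:cusp-complete} applied to each \(c_i\) gives properness around each \(z_i\), hence properness around every cusp, since \(P\) is finite.

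The only point requiring care, which I expect to be the main (mild) obstacle, is this bookkeeping of identifying arbitrary peripheral elements and arbitrary lifts with the standard cusp-strip setup used in the two propositions. The analytic content has already been handled by those two results.
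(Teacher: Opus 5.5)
Your proposal is correct and matches the paper's own proof, which consists precisely of combining Proposition~\ref{prop:proper-implies-parabolic} (forward direction) with Proposition~\ref{prop:cusp-complete} (converse). Your extra bookkeeping (reducing an arbitrary peripheral element to a nontrivial power of a standard cusp loop, checking that such powers of a parabolic element remain parabolic, and noting that properness does not depend on the chosen lift of the punctured disk) is correct and makes explicit what the paper leaves implicit.

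One caution, since you say the analytic content is already handled. The printed proof of Proposition~\ref{prop:cusp-complete} applies a removable-singularity theorem to $f|_{\Omega}$ as though it descended to a $\mathbb{CH}^2$-valued map on $U^{*}$, and it does not, because $f(c\cdot q)=\rho(c)f(q)\neq f(q)$. The proposition itself still holds on each lift of the cusp: in the flat cusp coordinates the energy density of $f$ is subharmonic, hence tends to $0$ uniformly as $y\to\infty$, so $d\bigl(f,\rho(c)f\bigr)\to 0$ uniformly, whereas a parabolic $\rho(c)$ has displacement bounded below on every compact set. So your deduction stands.
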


We end this section by mentioning a result on completeness and properness:

\begin{cor}\label{cor:proper-iff-complete-parabolic}
Let \(f\,:\,\mathbb H\,\longrightarrow\, \mathbb{CH}^{2}\) be a finite--energy, conformal, \(\rho\)--equivariant
minimal immersion. Assume that for every peripheral loop \(c\,\in\,\pi_1(\Sigma_0)\), the
holonomy \(\rho(c)\) is parabolic. Then \(f\) is proper around every cusp if and only if
the induced metric \(g_f\) is complete on \(\Sigma_0\).
\end{cor}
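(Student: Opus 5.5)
Under the parabolicity hypothesis, Theorem~\ref{thm:peripheral-classification} (more precisely Proposition~\ref{prop:cusp-complete}) already gives properness around every cusp. The content of the corollary is therefore that, under this hypothesis, both conditions hold automatically. The plan is to prove the two implications separately. In each direction the main input is properness, so the equivalence is in effect ``both sides are true.''

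\emph{Properness implies completeness.} Work near one puncture at a time. Let $U^*$ be a punctured disk about $z_i$ and let $\gamma:[0,\ell)\to U^*$ be a $g_f$--geodesic ray (or any divergent path) with $\gamma(t)\to z_i$ in $\Sigma$. Lift $\gamma$ to $\widetilde\gamma$ in a component $\Omega\subset\varpi^{-1}(U^*)$. By definition of $g_f=f^*h$, the $g_f$--length of $\gamma$ equals the $h$--length of $f\circ\widetilde\gamma$, and this is at least
\[
d_h\bigl(f(\widetilde\gamma(0)),\,f(\widetilde\gamma(t))\bigr).
\]
Properness (Definition~\ref{def:proper-puncture}) says this distance tends to $+\infty$ as $t\to\ell$, so every path escaping to the puncture has infinite $g_f$--length. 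The same holds at each of the finitely many punctures. On the complement of small disks about $P$, which is compact in $\Sigma_0$, the metric $g_f$ is a smooth Riemannian metric (since $f$ is an immersion), so Cauchy sequences cannot escape there either. A standard Hopf--Rinow argument then shows that $(\Sigma_0,g_f)$ is complete: a $g_f$--Cauchy sequence of finite length cannot accumulate at $P$, so it stays in a compact subset.

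\emph{Completeness implies properness.} Here we simply invoke Proposition~\ref{prop:cusp-complete}. Since every $\rho(c)$ is parabolic and $f$ is a finite-energy, conformal, harmonic, $\rho$--equivariant map, it is proper around every cusp, regardless of completeness. Alternatively, one can argue by contradiction, mirroring that proof. Non-properness together with the removable singularity theorem of \cite{schoen1982regularity} yields a continuous extension $\overline f$ across $z_i$, and equivariance then forces $\rho(c)$ to fix $\overline f(z_i)$, contradicting parabolicity.

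The only step requiring care is the first: making precise that ``every path to the puncture has infinite length'' implies metric completeness of $g_f$ on $\Sigma_0$. One also needs to check that the comparison $L_{g_f}(\gamma)\ge d_h(f(\widetilde\gamma(0)),f(\widetilde\gamma(t)))$ uses a lift on which $f$ is single-valued. Both points are routine, since the length is computed on $\mathbb H$ and $g_f$ is $\pi_1$--invariant. No step is expected to pose a genuine obstacle.
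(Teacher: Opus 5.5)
Your main line is correct and is the paper's own argument: properness gives completeness directly (your divergent-path argument supplies the details the paper calls immediate), and the converse holds because Proposition~\ref{prop:cusp-complete} yields properness from parabolicity alone, without using completeness. The one thing to discard is your ``alternative'' removable-singularity sketch, which is also how the paper proves Proposition~\ref{prop:cusp-complete}. On $\Omega$ you only have $f(c\cdot q)=\rho(c)f(q)$, so $f$ does not descend to a $\mathbb{CH}^2$--valued map on $U^*$, and the Hadamard-target extension theorem does not apply. A sound justification of that proposition runs as follows: $|f_x|^2+|f_y|^2$ is subharmonic in cusp coordinates (Bochner formula, nonpositive target curvature), so by periodicity and finite energy it tends to $0$ uniformly; hence $d\bigl(f(z),\rho(c)f(z)\bigr)\le 2\pi\sup_x|f_x|\to 0$, which forces $f(z)$ out of every compact set because a fixed-point-free isometry has displacement bounded below on compacta.
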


\begin{proof}
If \(f\) is proper around every cusp, then it follows directly that \((\Sigma_0,\, g_f)\) is complete.

To prove the converse, first note that in view of Theorem~\ref{thm:peripheral-classification},
the given condition that the holonomy \(\rho(c)\) is parabolic implies that \(f\) is proper around every cusp
(this does not use completeness). Hence, within this parabolic holonomy
setting, the two conditions ``proper ends'' and ``complete ends'' are equivalent.
\end{proof}

\section{Parabolic $\mathrm{PU}(2,1)$ Higgs bundles}\label{sec:parabolic-setup}

Recall the notation: $\Sigma$ is a compact connected Riemann surface, \[P\ =\ \{p_1,\,\cdots,\,p_m\}
\ \subset\ \Sigma\] is a nonempty finite subset of distinct marked points, and $\Sigma_0
\, :=\, \Sigma\setminus P$. Denote the log--canonical bundle
\[
K_{\log}\ :=\ K_\Sigma(P)\ =\ K_\Sigma\otimes \mathcal{O}_\Sigma(P).
\]

In Sections~\ref{sec:CompleteCONFORMALMINIMAL}--\ref{sec:Isometry-Properness-Completenss} we considered pairs
$(\rho,\, f)$ consisting of a reductive representation
$\rho\,:\,\pi_1(\Sigma_0)\,\longrightarrow\, \mathrm{PU}(2,1)$ and a $\rho$--equivariant conformal minimal immersion
$f\,:\,\mathbb H\,\longrightarrow\, \mathbb{CH}^2$ of finite energy. By Theorem~\ref{thm:peripheral-classification},
properness at each puncture is equivalent to the peripheral holonomy being parabolic. This is precisely the regime
of the tame/logarithmic nonabelian Hodge correspondence on punctured curves: Representations with prescribed
(parabolic) conjugacy classes around $P$ correspond to polystable parabolic Higgs bundles with logarithmic
singularities along $P$ \cite{Simpson1992,Biquard1997,BGM2020}.
The purpose of this section is to fix the parabolic $\mathrm{PU}(2,1)$--Higgs data, including the role of
{parabolic weights} and {nilpotent residues} (encoding the unipotent part).

\subsection{Parabolic $\mathrm{U}(2,1)$ Higgs data}

A {parabolic vector bundle} $E_*$ over $(\Sigma,\,P)$ consists of a holomorphic vector bundle $E
\, \longrightarrow\, \Sigma$
together with --- for each $p\,\in\, P$ --- a quasiparabolic filtration of the fiber
\[
E_p\, =\, F^1_p\, \supsetneq \,F^2_p \,\supsetneq\, \cdots\, \supsetneq\, F^{\ell(p)}_p
\,\supsetneq\, F^{\ell(p)+1}_p\,=\,\{0\},
\]
and real weights
\[
0\,\le\, \alpha^1_p \,\le\, \alpha^2_p \,\le\, \cdots \,\le\, \alpha^{\ell(p)}_p \,<\, 1
\]
assigned to the subspaces in the quasiparabolic filtration. In the $\mathrm{PU}(2,1)$ setting, the underlying
holomorphic vector bundle splits as
\[
E\ =\, V\oplus \mathcal{O}_\Sigma,
\]
where $V$ is some holomorphic vector bundle of rank two, and the Higgs field is off--diagonal with respect to
this splitting:
\begin{equation}\label{eq:off-diagonal-higgs}
\Phi\ =\ \begin{pmatrix}
0 & \Phi_1\\
\Phi_2 & 0
\end{pmatrix},
\end{equation}
where $\Phi_1\,:\,\mathcal{O}_\Sigma\,\longrightarrow\, V\otimes K_{\log}$ and
$\Phi_2\,:\,V\,\longrightarrow\, \mathcal{O}_\Sigma\otimes K_{\log}$. These conditions on $\Phi_1$, $\Phi_2$
imply that $\Phi$ has at most logarithmic singularities along $P$, i.e.,\ $\Phi\,\in\,
H^0(\Sigma,\,\mathrm{End}(E)\otimes K_{\log})$.

\begin{definition}[{Strongly parabolic Higgs field}]\label{def:strongly-parabolic}
Let $E_*$ be a parabolic vector bundle. A logarithmic Higgs field
$\Phi\,\in\, H^0(\Sigma,\,\mathrm{End}(E)\otimes K_{\log})$ is called {strongly parabolic} if for every
$p\,\in\, P$, its residue $N_p\,:=\,\mathrm{Res}_p(\Phi)\,\in \,\mathrm{End}(E_p)$ satisfies the
condition
\[
N_p(F^j_p)\ \subset\ F^{j+1}_p
\]
for all $j$.
\end{definition}

Two parabolic $\mathrm{U}(2,1)$--Higgs bundles are considered equivalent if they differ by
tensoring with a parabolic line bundle. The corresponding equivalence classes are parabolic
$\mathrm{PU}(2,1)$--Higgs bundles.

\subsection{Peripheral holonomy, weights, and nilpotent residues}\label{subsec:weights-residues}

Fix a point $p\,\in\, P$, and let $c_p\,\in\, \pi_1(\Sigma_0)$ be a positively oriented small loop about $p$.
Assume that $\rho(c_p)\,\in\, \mathrm{PU}(2,1)$ is parabolic 
(as mentioned before, $\rho(c_p)$ gives a conjugacy class in $\mathrm{PU}(2,1)$;
 the condition means that
the elements of this conjugacy class are parabolic). Write its Jordan decomposition as
\begin{equation}\label{j1}
\rho(c_p)\ =\ (\rho(c_p))_s \, (\rho(c_p))_u,
\end{equation}
where $(\cdot)_s$ is semisimple and $(\cdot)_u$ is unipotent.

\medskip
\noindent\textbf{Weights and screw--parabolic semisimple parts.}
Recall that $\rho(c_p)$ is parabolic (see Theorem \ref{thm:peripheral-classification}
and Corollary \ref{cor:proper-iff-complete-parabolic}). From this it follows that all
the eigenvalues of the semisimple factor $(\rho(c_p))_s$ in \eqref{j1} have absolute value
$1$ and two of them are equal \cite[p.~16, Theorem 3.6(ii)]{ParkerNotes}.
Choosing a local lift to $\mathrm{SU}(2,1)$ (always possible because $\pi_1(\Sigma_0)$ is 
a free group; see Lemma \ref{lem:lift-SU} below), we may write the eigenvalues
of the semisimple part in the form
\[
\mathrm{spec}\bigl((\rho(c_p))_s\bigr)\ =\ \left\{e^{2\pi\sqrt{-1}\alpha_{p,1}},\
e^{2\pi\sqrt{-1}\alpha_{p,2}},\ e^{2\pi\sqrt{-1}\alpha_{p,3}}\right\},
\]
where $0\,\le\, \alpha_{p,1}\,\le\, \alpha_{p,2}\,\le\, \alpha_{p,3}\,<\,1$. The
numbers $\alpha_{p,j}$ are the {parabolic weights} at $p$. Note $\alpha_{p,1}+\alpha_{p,2}+\alpha_{p,3}=0\,\, \mathrm{mod}(1)$, and $\alpha_{p,1}=\alpha_{p,2}$ or $\alpha_{p,2}=\alpha_{p,3}$. 

\medskip
\noindent\textbf{Nilpotent residues and the unipotent part.}
On the Higgs side, the unipotent factor $(\rho(c_p))_u$ is reflected by the residue
$N_p=\mathrm{Res}_p(\Phi)$ of a strongly parabolic Higgs field (Definition~\ref{def:strongly-parabolic}).
When the rank is $3$ there are exactly two nilpotent Jordan types:
\begin{equation}\label{eq:nilpotent-types}
N_p\ \neq\ 0,\ \ N_p^2\ =\ 0\, \quad\text{ (Jordan type $(2,\,1)$) },\, \quad\text{ or}
\end{equation}
$$
N_p^2\ \neq\ 0,\ \ N_p^3\ =\ 0 \qquad\text{(Jordan type $(3)$)}.
$$
Each type determines a canonical two-step quasiparabolic filtration of the fiber:
\begin{equation}\label{eq:canonical-filtration-from-N}
0\,\subset\, F^{(1)}_p \,\subset \, F^{(2)}_p \,\subset\, E_p
\end{equation}
defined by
\[
\begin{array}{ll}
\text{type $(2,1)$:} & F^{(1)}_p=\mathrm{im}(N_p)\ \ (\dim=1),\qquad F^{(2)}_p=\ker(N_p)\ \ (\dim=2),\\[0.4em]
\text{type $(3)$:} & F^{(1)}_p=\ker(N_p)\ \ (\dim=1),\qquad F^{(2)}_p=\ker(N_p^2)=\mathrm{im}(N_p)\ \ (\dim=2).
\end{array}
\]
A full flag $0\,\subset\, L_p\,\subset\, V_p\,\subset\, E_p$ refining \eqref{eq:canonical-filtration-from-N}
together with weights $\alpha_{p,1}\,\le\, \alpha_{p,2}\,\le\, \alpha_{p,3}$ gives the parabolic
structure. In particular, $N_p$ is nilpotent with respect to such a flag, and hence the
Higgs field is strongly parabolic.

The parabolic weights encode the semisimple (screw) part of the peripheral conjugacy class, while the nilpotent residue
encodes the unipotent part. The two Jordan types in \eqref{eq:nilpotent-types} reappear in Section~\ref{sec:parabolic-ends}
as the two end types discussed there.
Moreover, when proving stability in Section~\ref{sec:alg-characterization}, we keep the weights throughout and compute
{parabolic} degrees of the relevant $\Phi$-invariant subbundles. 

\subsection{Parabolic degree, slope, and stability}\label{subsec:pardeg-stability}

Let \(E_*\) be a parabolic vector bundle on \((\Sigma,\,P)\) of rank \(r\).
Fix, at each parabolic point \(p\,\in\, P\) a weighted flag with weights \(0\,\le\, \alpha_{p,1}
\,\le\,\cdots\,\le\, \alpha_{p,r}\,<\,1\) (weights have multiplicities).
The {\it parabolic degree} and {\it parabolic slope} are defined by
\[
\deg_{\mathrm{par}}(E_*)\,:=\,\deg(E)+\sum_{p\in P}\sum_{j=1}^{r}\bigl(\dim(F^j_p/F^{j+1}_p)\bigr)\,\alpha_{p,j},
\ \ \,
\mu_{\mathrm{par}}(E_*)\,:=\,\frac{\deg_{\mathrm{par}}(E_*)}{r}.
\]
Any subbundle \(F\,\subset\, E\) inherits an induced parabolic structure \(F_*\) by intersecting flags; the
resulting parabolic vector bundle is denoted by \(F_*\). The subbundle $F$ is called \(\Phi\)--{\it invariant}
if we have \(\Phi(F)\,\subset\, F\otimes K_{\log}\).
A parabolic Higgs bundle \((E_*,\,\Phi)\) is {\it stable} (respectively, {\it semistable}) if for every proper
nonzero \(\Phi\)--invariant parabolic subbundle \(F_*\,\subset\, E_*\) one has
\(\mu_{\mathrm{par}}(F_*)\, < \, \mu_{\mathrm{par}}(E_*)\) (respectively,
\(\mu_{\mathrm{par}}(F_*)\, \leq \, \mu_{\mathrm{par}}(E_*)\)). When all the parabolic weights are zero,
parabolic (semi)stability coincides with the usual (semi)stability.

\subsection{Principal bundles and projective monodromy}\label{subsec:principal-bundles}

Let $\rho\,:\,\pi_1(\Sigma_0)\,\longrightarrow\, \mathrm{PU}(2,1)$ be a reductive representation.
Since $\pi_1(\Sigma_0)$ is a free group, the homomorphism $\rho$ lifts to a homomorphism
$\overline{\rho}\,:\,\pi_1(\Sigma_0)\,\longrightarrow\, \mathrm{SU}(2,1)$. The flat principal
$\mathrm{SU}(2,1)$--bundle on $\Sigma_0$ corresponding to $\overline{\rho}$ has the following
description:
\begin{equation}\label{ep}
P_{\overline{\rho}}\ :=\ (\mathbb H\times \mathrm{SU}(2,1))/\!\sim,\ \ \
(\gamma\cdot x,\,y)\ \sim\ (x,\,\overline{\rho}(\gamma)y),\ \ \ \gamma\, \in\, \pi_1(\Sigma_0)
\end{equation}
whose flat connection is induced from the trivial connection on the trivial principal
$\mathrm{SU}(2,1)$--bundle $\mathbb H\times \mathrm{SU}(2,1)\,\longrightarrow\, \mathbb H$.
The associated flat Hermitian vector bundle of signature $(2,\,1)$ is
\begin{equation}\label{eq:flat-bundle}
E_{\mathrm{flat}}\ :=\ P_{\overline{\rho}}\times^{\mathrm{SU}(2,1)}\mathbb{C}^{2,1}\
\longrightarrow\ \Sigma_0 .
\end{equation}
Quotienting by the center $Z\,\simeq\, \mathbb{Z}/3\mathbb{Z}$ of $\mathrm{SU}(2,1)$ gives the
corresponding flat principal $\mathrm{PU}(2,1)$-bundle $P_{\rho}\,=\,P_{\overline{\rho}}/Z$.

If $\overline{\rho}'$ is another lift of $\rho$, then $\overline{\rho}'\,=\,\varepsilon\cdot\rho$ for a
central character $\varepsilon\,:\,\pi_1(\Sigma_0)\,\longrightarrow\, Z$. The corresponding flat
$\mathrm{SU}(2,1)$-bundles differ by twisting by a
flat line bundle of order three, hence the induced parabolic $\mathrm{PU}(2,1)$--Higgs
bundle is independent of the choice of lift.

Note that $\pi_1(\Sigma_0)$ is a free group because $\Sigma-0$ is a noncompact Riemann surface; in fact,
$\pi_1(\Sigma_0)$ is isomorphic to the free group with $2g+n-1$ generators (see \eqref{eg}).

\begin{lemma}\label{lem:lift-SU}
The representation \(\rho\,:\,\pi_1(\Sigma_0)\,\longrightarrow\, \mathrm{PU}(2,1)\) admits a lift
\[\overline{\rho}\ :\ \pi_1(\Sigma_0)\ \longrightarrow\ \mathrm{SU}(2,1)\] with
\(\mathrm{pr}\circ\overline{\rho}\,=\,\rho\big\vert_{\pi_1(\Sigma_0)}\), where
$\mathrm{pr}\, :\, \mathrm{SU}(2,1)\, \longrightarrow\, \mathrm{PU}(2,1)\, =\, \mathrm{SU}(2,1)/Z$
is the quotient map. Any two such lifts differ by a homomorphism \(\pi_1(\Sigma_0)
\, \longrightarrow\, \mathbb{Z}_3\). 
\end{lemma}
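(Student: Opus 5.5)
The plan is to use that \(\pi_1(\Sigma_0)\) is free, a fact already recorded above. Since \(\Sigma_0\) is a noncompact surface, \(\pi_1(\Sigma_0,\,x_0)\) is the free group \(F\) on \(N\,=\,2g+n-1\) generators \(\gamma_1,\,\cdots,\,\gamma_N\); this is seen by deformation retracting \(\Sigma_0\) onto a wedge of circles. By the universal property of free groups, a homomorphism out of \(F\) is determined by arbitrary choices of images of the generators, with no relations to verify.

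\textbf{Existence.} For each \(i\), the quotient map \(\mathrm{pr}\,:\,\mathrm{SU}(2,1)\,\longrightarrow\,\mathrm{PU}(2,1)\,=\,\mathrm{SU}(2,1)/Z\) is surjective. So we can choose an element \(A_i\,\in\,\mathrm{SU}(2,1)\) with \(\mathrm{pr}(A_i)\,=\,\rho(\gamma_i)\). Concretely, take any lift \(\widetilde A\,\in\,\mathrm{U}(2,1)\) and rescale it by a cube root of \(\det(\widetilde A)^{-1}\); this preserves the Hermitian form, since that cube root has modulus one. Then define \(\overline{\rho}\) on the generators by \(\overline{\rho}(\gamma_i)\,:=\,A_i\) and extend to a homomorphism \(\overline{\rho}\,:\,F\,\longrightarrow\,\mathrm{SU}(2,1)\). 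The two maps \(\mathrm{pr}\circ\overline{\rho}\) and \(\rho\) are homomorphisms that agree on generators, so they coincide.

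\textbf{Uniqueness up to \(\mathbb{Z}_3\).} Let \(\overline{\rho}\) and \(\overline{\rho}'\) be two lifts, and set \(\varepsilon(\gamma)\,:=\,\overline{\rho}'(\gamma)\,\overline{\rho}(\gamma)^{-1}\). This lies in \(\ker(\mathrm{pr})\,=\,Z\,=\,\{\omega I\,\mid\,\omega^3=1\}\,\cong\,\mathbb{Z}_3\). Because \(Z\) is central, we get
\[
\varepsilon(\gamma\delta)\,=\,\overline{\rho}'(\gamma)\,\overline{\rho}'(\delta)\,\overline{\rho}(\delta)^{-1}\,\overline{\rho}(\gamma)^{-1}\,=\,\overline{\rho}'(\gamma)\,\varepsilon(\delta)\,\overline{\rho}(\gamma)^{-1}\,=\,\varepsilon(\gamma)\,\varepsilon(\delta),
\]
so \(\varepsilon\) is a homomorphism and \(\overline{\rho}'\,=\,\varepsilon\cdot\overline{\rho}\). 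Conversely, for any homomorphism \(\varepsilon\,:\,F\,\longrightarrow\,Z\), the product \(\varepsilon\cdot\overline{\rho}\) is again a homomorphism, using centrality, and it is again a lift. Hence the set of lifts is a torsor under \(\mathrm{Hom}(\pi_1(\Sigma_0),\,\mathbb{Z}_3)\,\cong\,\mathbb{Z}_3^{N}\).

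There is no real obstacle in this argument. The one point needing care is the freeness of \(\pi_1(\Sigma_0)\), which uses \(n\,\ge\,1\) from Notation~\ref{rem:notation}. On a closed surface the lift would instead be obstructed by the surface relation, through a class in \(H^2(\Sigma,\,\mathbb{Z}_3)\).
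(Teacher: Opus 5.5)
Your proof is correct and takes the same approach the paper sketches around the lemma (the paper gives no separate proof). Freeness of \(\pi_1(\Sigma_0)\) lets you lift each generator independently through the surjection \(\mathrm{pr}\), and two lifts differ by the central character \(\gamma\mapsto\overline{\rho}'(\gamma)\,\overline{\rho}(\gamma)^{-1}\) with values in \(Z\cong\mathbb{Z}_3\). Your explicit determinant rescaling and centrality computation supply the details the paper leaves implicit.
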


\section{From minimal immersion to parabolic data}\label{sec:parabolic-higgs}

As before, $\rho\,:\,\pi_1(\Sigma_0)\,=\, \pi_1(\Sigma\setminus P)\, \longrightarrow\, \mathrm{PU}(2,1)$ is
a reductive representation, and
$$f\ \,:\ \, \H\ \, \longrightarrow\ \, \mathbb{CH}^2$$ is a $\rho$--equivariant conformal minimal
immersion of finite energy (hence $f$ is harmonic).
Assume that $f$ is {proper around each puncture} (see Definition~\ref{def:proper-puncture}).
Then Theorem~\ref{thm:peripheral-classification} implies that for every $p\,\in\, P$ and any peripheral loop
$c$ about $p$, the peripheral holonomy $\rho(c)$ is parabolic.

Properness is a condition on the map $f$, while completeness is a condition on the induced metric $g_f\,:=\,
f^{*}h_{\mathbb{CH}^2}$ (cf.\ Definition~\ref{def:proper-puncture} and Definition~\ref{def:complete-puncture}).
Since $\mathbb{CH}^2$ is complete, the surface $\Sigma_0$ with the Riemannian metric $g_f$ is complete
because $f$ is proper. In general, the converse need not hold, but once the peripheral holonomy is parabolic,
the two notions actually agree; see Corollary~\ref{cor:proper-iff-complete-parabolic}. In view of this,
under parabolic peripheral holonomy, we freely switch between the phrases ``proper end'' and ``complete end''.

\subsection{Tame nonabelian Hodge correspondence (recall)}

Let $(E,\,\Phi)$ be a parabolic $G$--Higgs bundle on $(\Sigma,\,P)$, where $G$ is a real reductive group
(for example, $G\,=\,\mathrm{PU}(2,1)$), and let $h$ be a reduction of structure group of $E$
to a maximal compact subgroup $K\,\subset\, G$.
Take a parabolic point $p\,\in\, P$, and choose a local holomorphic coordinate function $z$, centered at $p$,
on a neighborhood of $p$. With respect to a holomorphic trivialization of $E$
compatible with the weighted parabolic filtration, we say that $h$ is {\it adapted} (or {\it tame}) at $p$
if it is quasi-isometric to the standard model metric $h_0$ determined by the parabolic weights $\alpha_p$
and the chosen nilpotent--orbit data; equivalently, there is a constant $C\,\ge\, 1$ such that
\[
C^{-1} h_0\, \;\le\,\; h\, \;\le\, \; C\,h_0
\]
on a punctured neighborhood of $p$.

In this case, the Higgs field has at most logarithmic growth, i.e.
\begin{align*}
\Phi \ \in\ & H^0\!\bigl(\Sigma,\,\mathrm{ad}(E)\otimes K_{\log}\bigr),\\
\Phi \ =\ & \left(\frac{s_p}{z} + \text{(log--corrected nilpotent term)} + O(|z|^\epsilon)\right)\,dz,
\end{align*}
for some $\epsilon\,>\,0$. In particular, $\Phi$ has a simple pole at $p$, and its residue is compatible with the
parabolic filtration: its semisimple part is prescribed by the weights, while the remaining part is nilpotent
and (strongly) parabolic with respect to the weighted flag.

We use the nonabelian Hodge theoretic correspondence for parabolic $G$--Higgs bundles and parabolic
$G$--local systems. Under the usual topological constraints (vanishing of parabolic degrees for all characters),
one has the following:
\begin{itemize}
\item \textbf{Existence/uniqueness of an adapted solution.}\,\,
A parabolic $G$--Higgs bundle $(E,\,\Phi)$ admits a $c$--Hermitian Yang-Mills- Higgs metric $h$ which is adapted at each
puncture if and only if $(E,\,\Phi)$ is $c$--polystable; moreover $h$ is unique up to automorphisms of
$(E,\,\Phi)$.

\item \textbf{From Higgs data to flat connections and back.}\,\,
Given an adapted solution $h$, the associated connection
\[
D\, \;=\,\; A(h)\;+\;\Phi\;-\;\tau_h(\Phi)
\]
is flat on $\Sigma_0$, with monodromy lying in the prescribed parabolic conjugacy class at each puncture.

Conversely, a polystable parabolic $G$--local system admits a compatible harmonic metric, giving a polystable 
parabolic $G$--Higgs bundle with logarithmic singularities.
\end{itemize}

These statements for real reductive $G$ are proved in \cite{BGM2020}:
\cite[Theorem~5.1]{BGM2020} (existence/uniqueness of a quasi-isometric adapted Hermite--Yang-Mills-Higgs metric for
$c$--polystable parabolic $G$--Higgs bundles), \cite[Proposition~6.3]{BGM2020}
(description of the induced parabolic structure and the precise
asymptotic compatibility condition on the resulting local system), and \cite[Theorem~6.6]{BGM2020}
(polystability of a parabolic $G$--local system $\Leftrightarrow$ existence of a compatible harmonic metric). 
For complex groups and general tame harmonic bundles we refer to Mochizuki's work
\cite[Theorem~1.4]{mochizuki2008}.
\medskip
\begin{remark}[On the constant $c$]
In \cite{BGM2020}, the label ``$c$'' means that the harmonic metric $h$ solves the
Hitchin equation with a \emph{central} constant
\[
F_{A(h)}\;-\;[\Phi,\tau_h(\Phi)] \;=\; -\,\sqrt{-1}\,c\,\omega
\qquad\text{on }\Sigma_0,
\]
together with the prescribed asymptotics at the punctures. Here
$c\in \sqrt{-1}\,\mathfrak{z}(\mathfrak{k})$ lies in the center of the Lie algebra of a maximal compact subgroup
$K\subset G$; it is determined by the topological type (equivalently, by the parabolic degrees of the bundles
associated to characters of $G$).

In our case, after passing to $G=\mathrm{PU}(2,1)$, this constraint is automatic: $\mathrm{PU}(2,1)$ is 
adjoint, so it has no nontrivial characters, and $\mathfrak{z}(\mathfrak{k})\,=\,0$. Hence the relevant 
equation is the standard one with $c\,=\,0$.
\end{remark}

\medskip
\noindent\textbf{How the end hypothesis is used.}\,\,
In our application, we do not use the implication of the form ``complete end $\Rightarrow$ tame''.
Instead, we first use the properness of ends to conclude that the peripheral holonomy is parabolic
(Theorem~\ref{thm:peripheral-classification}). The resulting parabolic conjugacy class at each $p\,\in\, P$
determines the corresponding parabolic weights (the semisimple ``screw'' part) together with the admissible
nilpotent residue data (the unipotent part); see Section~\ref{sec:parabolic-setup}.
With these data fixed, the punctured Donaldson--Uhlenbeck--Yau correspondence then provides an adapted harmonic metric and
hence a logarithmic (parabolic) Higgs field. In particular, for purely unipotent monodromy, the residue is nilpotent,
whereas for screw--parabolic monodromy the residue has the prescribed semisimple part together with a nilpotent part
that is strongly parabolic with respect to the weighted filtration.

\subsection{From a harmonic map to a Higgs field}\label{subsec:sec6-harmonic-to-higgs}

Let $\rho\,:\,\pi_1(\Sigma_0)\,\longrightarrow\, \mathrm{PU}(2,1)$ be a reductive representation and
\[
f\ :\ \mathbb{H}\ \longrightarrow\ \mathbb{CH}^2
\]
a $\rho$--equivariant minimal immersion.
 By Lemma~\ref{lem:lift-SU}, the representation $\rho$ admits a lift
\[
\overline{\rho}\ :\ \pi_1(\Sigma_0)\ \longrightarrow\ \mathrm{SU}(2,1),
\]
which is unique up to twisting by a central character. Fix one such lift.
Although $f$ is assumed to be only $\rho$--equivariant, it is automatically $\overline\rho$--equivariant:
indeed the action of $\mathrm{SU}(2,1)$ on $\mathbb{CH}^2$ factors through $\mathrm{PU}(2,1)$; hence for all
$\gamma\,\in\,\pi_1(\Sigma_0)$ and $p\,\in\,\mathbb{H}$,
\[
f(\gamma\cdot p)\ =\ \rho(\gamma)\cdot f(p)\ =\ \overline\rho(\gamma)\cdot f(p).
\]

Let $P_{\overline\rho}\,\longrightarrow\, \Sigma_0$ be the flat principal $\mathrm{SU}(2,1)$--bundle
given by $\overline{\rho}$ (see \eqref{ep}), and let
\(
E_{\mathrm{flat}}\ :=\ P_{\overline\rho}\times_{\mathrm{SU}(2,1)}\C^{2,1}\ \longrightarrow\ \Sigma_0
\)
be the associated flat rank--$3$ complex vector bundle, with flat connection $D$ (see \eqref{eq:flat-bundle}).

A $\rho$--equivariant map $f\,:\,\mathbb H\,\longrightarrow\, \mathbb{CH}^2\,\cong\,
\mathrm{SU}(2,1)/\mathrm{S}(\mathrm{U}(2)\times \mathrm{U}(1))$
is equivalent to a reduction of structure group of $P_{\overline\rho}$ to the maximal compact subgroup
$K\,:=\, S(U(2)\times U(1))$ of $\mathrm{SU}(2,1)$. Via the standard representation of $\mathrm{SU}(2,1)$, such
a $K$--reduction is equivalently
encoded by a smooth Hermitian structure $H$ on $E_{\mathrm{flat}}$ (positive definite), i.e.,\, a
smooth reduction of $E_{\mathrm{flat}}$ from $\mathrm{GL}(3,\C)$ to $\mathrm{U}(3)$ which
is compatible with the $\mathrm{SU}(2,1)$--structure.
In particular, starting from $f$ we obtain a canonically associated Hermitian structure $H$
on $E_{\mathrm{flat}}$ (the Hermitian structure corresponding to the reduction defined by $f$).

Given any such Hermitian structure $H$ on $E_{\mathrm{flat}}$, there is a unique $H$--unitary connection
$\nabla_H$ and a unique $H$--self--adjoint $1$--form $\Psi_H\,\in\,\Omega^1(\Sigma_0,\,
\mathrm{End}(E_{\mathrm{flat}}))$ such that
\begin{equation}\label{eq:D-splitting}
D\,\ =\,\ \nabla_H+\Psi_H.
\end{equation}
Decomposing into types with respect to the complex structure on $\Sigma_0$,
\[
\nabla_H\ =\ \nabla_H^{1,0}+\nabla_H^{0,1},
\ \ \
\Psi_H\ =\ \Phi_H+\Phi_H^{\dagger},\ \ \ \Phi_H\ :=\ \Psi_H^{1,0}.
\]
The operator $\overline{\partial}_E\,:=\,\nabla_H^{0,1}$ defines a holomorphic structure on the underlying
$C^\infty$ vector bundle $E$, and $\Phi_H\,\in\,\Omega^{1,0}(\Sigma_0,\,\mathrm{End}(E))$ is a Higgs field
with respect to $\overline{\partial}_E$.

The Hermitian structure $H$ is called {\it harmonic} if it is a critical point of the energy functional of the
associated equivariant map (equivalently, if the corresponding reduction to $K$ is harmonic).
Then $H$ is harmonic if and only if the pair $(\overline{\partial}_E,\,\Phi_H)$ satisfies the Hitchin equations,
and this is equivalent to the harmonicity of the corresponding $\rho$--equivariant map
$f\,:\,\mathbb H \,\longrightarrow\, \mathbb{CH}^2$.

\begin{remark}
Different choices of the $\mathrm{SU}(2,1)$--lift $\overline\rho$ differ by twisting by a central character
$\pi_1(\Sigma_0)\,\longrightarrow\, Z(\mathrm{SU}(2,1))\,\cong\,\Z/3\Z$, and this changes
$E_{\mathrm{flat}}$ by tensoring with a flat
line bundle of order three. This does not change the corresponding $\mathrm{PU}(2,1)$--Higgs data.
\end{remark}

\subsection{Tameness at punctures and parabolic extension}\label{subsec:sec6-tame}

We now explain precisely where the hypothesis ``proper (equivalently complete) ends'' enters when starting 
from a minimal immersion. Finite energy alone controls the $L^2$--norm of $df$, but does not by itself 
guarantee the growth to be moderate, which is needed for the logarithmic/parabolic extensions.
The additional properness/tameness hypothesis is what allows one to invoke tame nonabelian
Hodge theory on $(\Sigma,\,P)$.

We recall the fact about the tame harmonic bundle associated to a complete end
\cite{Simpson1992} \cite{Biquard1997} \cite{mochizuki2008}. 
Let $f\,:\,\mathbb H\,\longrightarrow\, \mathbb{CH}^2$ be a $\rho$--equivariant harmonic map of finite energy whose induced metric has complete ends.
Then the corresponding harmonic metric $H$ on $E_{\mathrm{flat}}\big\vert_{\Sigma_0}$ has {tame} asymptotics
at every $p\,\in\, P$. After choosing a holomorphic coordinate $z$ near $p$, the connection $D$ expressed in a
local holomorphic frame has at most
logarithmic growth, the Higgs field $\Phi_H$ has at most simple poles (i.e.,\, $\Phi_H\,
\in\, H^0(\Sigma,\,\mathrm{End}(E)\otimes K_{\log})$), and each residue
$N_p=\mathrm{Res}_p(\Phi_H)$ is {strongly parabolic} for the induced parabolic filtration.

\begin{cor}[{Parabolic Higgs bundle associated to $f$}]\label{cor:parabolic-higgs-from-f}
The above Higgs bundle $(E,\,\Phi_H)$ extends uniquely to a parabolic Higgs bundle
$(E_*,\,\Phi)$ on $(\Sigma,\,P)$ such that $\Phi\,\in\, H^0(\Sigma,\mathrm{End}(E)\otimes K_{\log})$ and each residue $\mathrm{Res}_p(\Phi)$ is strongly parabolic.
The parabolic weights at $p$ encode the semisimple part of the parabolic conjugacy class of $\rho(c_p)$, as explained in
Section~\ref{subsec:weights-residues}.
\end{cor}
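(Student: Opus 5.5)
The plan is to build the parabolic extension from the tame asymptotics recalled just before the statement, using Simpson's prolongation by growth of sections. For each $p\in P$ fix a coordinate $z$ centred at $p$ and a punctured disk $U_p^*$. For every real $\alpha$, let $E_\alpha$ be the sheaf on $\Sigma$ that agrees with the holomorphic bundle $(E,\overline{\partial}_E)$ on $\Sigma_0$. Near $p$, its sections are the holomorphic sections $s$ of $E|_{U_p^*}$ satisfying
\[
|s|_H \;=\; O\bigl(|z|^{\alpha-\varepsilon}\bigr)\quad\text{for every }\varepsilon>0 .
\]
Then:
\begin{enumerate}
\item Tameness of $H$ (Simpson \cite{Simpson1990}, Mochizuki \cite{mochizuki2008}) implies that each $E_\alpha$ is locally free of rank $3$.
\item The family $E_\alpha$ is decreasing in $\alpha$ and satisfies $E_{\alpha+1}=E_\alpha(-P)$.
\item The jumps of the family lie in a finite set of weights modulo $1$.
\end{enumerate}
We take $E:=E_0$ as the extended bundle. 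The flag at $p$ is the image of $E_\alpha|_p$ in $E_0|_p$ for $\alpha\in[0,1)$, with the jump values as weights. This gives $E_*$.

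To see that $\Phi$ is logarithmic, we use the tameness statement already quoted. The eigenvalues of $\Phi_H$ have growth $O(|z|^{-1})$, so Simpson's norm estimate gives $|\Phi_H|_{H}\le C/(|z|\,|\log|z||)$ up to a bounded factor. Hence $\Phi_H$ maps $E_\alpha$ into $E_\alpha\otimes K_{\log}$. Applying this with $\alpha=0$ gives $\Phi\in H^0(\Sigma,\mathrm{End}(E)\otimes K_{\log})$. Applying it with every $\alpha$ shows that the residue preserves the filtration. To upgrade this to \emph{strong} parabolicity, we show that the induced map on each graded piece $\mathrm{Gr}_\alpha$ vanishes. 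That map is the residue of the model Higgs field, and it is nilpotent because the peripheral monodromy is parabolic. The eigenvalues of $\mathrm{Res}_p\Phi$ are therefore $0$, since in rank one the modulus of the monodromy eigenvalue records the real part of the residue eigenvalue, and here all these moduli are $1$. We then refine the filtration by the canonical flag \eqref{eq:canonical-filtration-from-N} so that the nilpotent part shifts it strictly. This is the refinement described in Section~\ref{subsec:weights-residues}.

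For the weights, we compare with the flat side. The jump $\alpha$ of $E_*$ at $p$ matches the eigenvalue $e^{2\pi\sqrt{-1}\alpha}$ of $(\rho(c_p))_s$ under Simpson's table: the parabolic weight equals minus the log of the monodromy eigenvalue modulus plus the imaginary phase. Because all monodromy moduli equal $1$, the weights are exactly the $\alpha_{p,j}$ of Section~\ref{subsec:weights-residues}. For a $\mathrm{PU}(2,1)$ statement we also keep the splitting $V\oplus\mathcal O$. The harmonic metric respects the $\mathbb{C}^{2,1}$ signature decomposition given by the $K$-reduction, so the filtration by growth is compatible with that splitting. The prolongation is then done summand by summand, and $\Phi$ stays off-diagonal as in \eqref{eq:off-diagonal-higgs}.

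Uniqueness holds because any parabolic extension with adapted $H$ must consist of sections whose $H$-growth matches the weights. Hence it coincides with the prolongation $E_\alpha$, which is defined intrinsically from $H$. The $\mathbb{Z}/3$ ambiguity of the lift only twists by a flat order-three line bundle, which is invisible in $\mathrm{PU}(2,1)$.

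The main obstacle is the first input: the tame asymptotics of $H$ at the punctures, from which local freeness of $E_\alpha$ and the logarithmic bound on $\Phi_H$ follow. The plan is to take this directly from the recalled fact, for finite energy and complete ends, together with Mochizuki's theory \cite{mochizuki2008}. If a self-contained argument is needed, I would first prove that the eigenvalues of $\Phi_H$ are $O(|z|^{-1})$, using that $\det$ and $\mathrm{tr}\,\Phi^2$ are holomorphic differentials with finite $L^1$-type norms coming from finite energy, so they have at most poles of the appropriate order. Simpson's criterion then says that this eigenvalue bound is exactly tameness.
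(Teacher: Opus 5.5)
Your overall route is the paper's own. The paper reads this corollary directly off the recalled tameness statement plus the Simpson--Biquard--Mochizuki prolongation, and you spell that prolongation out; your reading of ``uniquely'' as uniqueness among extensions adapted to $H$ is the right one.

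The one step you argue yourself, however, fails. Vanishing of the eigenvalues of $\mathrm{Res}_p\Phi$ does not follow from parabolicity of $\rho(c_p)$. In Simpson's table, an eigenvalue $\zeta=b+\sqrt{-1}\,c$ of the residue on $\mathrm{Gr}_\alpha$ corresponds to the monodromy eigenvalue $\exp(-2\pi\sqrt{-1}\,\alpha+4\pi c)$. So unit modulus gives only $c=0$, which is the imaginary part, not the real part. The real part $b$ equals $-\tfrac12$ times the growth exponent of flat sections in $H$, and that is not a monodromy invariant. Concretely, take the trivial rank-one local system with flat section $s$ and harmonic metric $|s|_H=|z|^{-2b}$, with $b\neq 0$ real. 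It has Higgs field $b\,dz/z$ and trivial monodromy. The same confusion appears in your weight step: the Higgs weight is read off from the phase of the monodromy eigenvalue alone, while the modulus records $c$.

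The fix needs a hypothesis you never invoke; either of the following works.
\begin{enumerate}
\item \emph{Finite energy.} The energy density is comparable to $|\Phi_H(\partial_z)|_H^2$. Near $p$ this is at least $|\zeta|^2/(4|z|^2)$, because the spectral radius of $z\Phi_H(\partial_z)$ tends to that of the residue. This is not integrable near $p$ unless $\zeta=0$.
\item \emph{Conformality (simpler).} $\Phi_H$ is off-diagonal for the holomorphic $H$-orthogonal splitting $W\oplus L$, with blocks $\beta\colon L\to W\otimes K$ and $\gamma\colon W\to L\otimes K$. Since $\gamma\beta$ is a scalar, $\Phi_H^3=(\gamma\beta)\,\Phi_H=\tfrac12\mathrm{tr}(\Phi_H^2)\,\Phi_H=0$. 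Hence $\Phi_H$ is pointwise nilpotent and $(\mathrm{Res}_p\Phi)^3=0$.
\end{enumerate}

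The conformality observation has further consequences for your write-up:
\begin{enumerate}
\item It makes tameness automatic, since the characteristic polynomial is $t^3$. Your fallback via $L^1$ bounds on $\det\Phi$ and $\mathrm{tr}\,\Phi^2$ is therefore unnecessary.
\item It removes the circularity of quoting $|\Phi_H|_H\le C/(|z|\,|\log|z||)$ before nilpotency is known, since that bound presupposes nilpotent residues. For logarithmicity alone, $C/|z|$ suffices.
\item The map induced on $\mathrm{Gr}_\alpha$ need not vanish. Unipotent monodromy with a Jordan block, as in the $n$-noid examples, has all weights $0$ and a nonzero nilpotent residue. So the refinement step is essential.
\item The refinement must be made inside each $\mathrm{Gr}_\alpha$, using a flag strictly lowered by the induced nilpotent. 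The canonical flag of $N_p$ on the whole fibre need not refine the weight filtration when the weights differ.
\end{enumerate}
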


\subsection{Explicit data: $(E, \, \Phi_H)$}\label{subsec:sec6-mixed-splitting} 

As in \cite{loftin_mcintosh_2013}, the $\mathrm{SU}(2,1)$--geometry of $\mathbb{CH}^2$ provides a distinguished
complex line: At each point $x\,\in\, \mathbb H$, the map $f(x)\,\in\, \mathbb{CH}^2$ is a negative complex
line in $\mathbb{C}^{2,1}$.
Equivalently, the reduction defined by $f$ determines a $D$--parallel negative line subbundle
$L\,\subset\, E_{\mathrm{flat}}\big\vert_{\Sigma_0}$.
Let $W\,:=\, L^{\perp_H}$ be its $H$--orthogonal complement.
Thus, over $\Sigma_0$ we have an orthogonal decomposition
\begin{equation}\label{eq:sec6-orth-splitting}
E_{\mathrm{flat}}\big\vert_{\Sigma_0}\ \cong\ W\, \oplus\, L,
\qquad
\text{with }\ \mathrm{rk}(W)\ =\ 2,\ \mathrm{rk}(L)\ =\ 1.
\end{equation}

Passing to the associated holomorphic data on $\Sigma_0$ (see Corollary~\ref{cor:parabolic-higgs-from-f}),
and using the equivalence relation for $\mathrm{PU}(2,1)$--Higgs bundles
given by tensoring with parabolic line bundles,
we normalize the line factor so that the holomorphic vector bundle underlying the
$\mathrm{PU}(2,1)$--Higgs bundle is of the form
\begin{equation}\label{eq:sec6-hol-splitting}
E\ \cong\ V\, \oplus\, {\mathcal O}_\Sigma
\end{equation}
on $\Sigma$.
With respect to the splitting in \eqref{eq:sec6-hol-splitting}, the Higgs field is off--diagonal:
\begin{equation}\label{eq:sec6-offdiag}
\Phi\ =\
\begin{pmatrix}
0 & \Phi_1\\
\Phi_2 & 0
\end{pmatrix}
\end{equation}
where $\Phi_1\,:\,{\mathcal O}_\Sigma\,\longrightarrow\, V\otimes K_{\log}$ and
$\Phi_2\,:\,V\,\longrightarrow\, {\mathcal O}_\Sigma\otimes K_{\log}$.

Moreover, conformality of $f$ is equivalent to the condition $\mathrm{Tr}(\Phi^2)\,=\,0$ (cf.\ 
\cite{loftin_mcintosh_2013}). The holomorphic (respectively, anti--holomorphic) cases correspond to
$\Phi_2\,\equiv\, 0$ 
(respectively, $\Phi_1\,\equiv\, 0$); the {\it mixed} case is characterized by
the condition $\Phi_1\,\not\equiv\, 0 \,\not\equiv\,\Phi_2$.

\subsection{Divisors and the logarithmic extension sequence}\label{subsec:sec6-log-extension}

Assume that $f$ is mixed. Then in \eqref{eq:sec6-offdiag} both off--diagonal components
$\Phi_1$ and $\Phi_2$ are nonzero.
So $\Phi_1$ and $\Phi_2$ have at most logarithmic singularities along $P$. Let $D_1$ (respectively, $D_2$)
be the effective divisor on $\Sigma$ given by the zeros of $\Phi_1$ (respectively, $\Phi_2$).
Geometrically, the support of $D_1$ (respectively, $D_2$) is the set of anti--complex
(respectively, complex) points of the immersion $f$.

Using the canonical identification
\[
V\otimes K_{\log}\ \cong\ \mathcal{H}om(K_{\log}^{-1},\, V),
\]
the section $\Phi_1$ determines an ${\mathcal O}_\Sigma$--linear morphism of coherent analytic sheaves
\[
\widetilde{\Phi}_1\ :\ K_{\log}^{-1}\ \longrightarrow\ V.
\]

It can be shown that $\widetilde{\Phi}_1$ is injective as a {\it sheaf} map and its cokernel has torsion
precisely at the zeros of $\Phi_1$.

To prove this, note that at any point $x\,\in\,\Sigma$ the stalk $(K_{\log}^{-1})_x$ is a free
${\mathcal O}_{\Sigma,x}$--module of rank $1$, and
hence it is torsion--free, while $V_x$ is torsion--free of rank $2$. Since $\Phi_1\,\not\equiv\,
0$, the induced map on the generic point is nonzero, so the kernel of $\widetilde{\Phi}_1$ is a torsion
subsheaf of $K_{\log}^{-1}$. But $K_{\log}^{-1}$ is torsion--free, hence $\ker(\widetilde{\Phi}_1)
\,=\,0$ and $\widetilde{\Phi}_1$ is a sheaf injection.

Locally near a point $x\,\in\, \mathrm{supp}(D_1)$, choose a
holomorphic coordinate function $z$ and holomorphic frames so that
$\widetilde{\Phi}_1$ is represented by a column vector of holomorphic functions. If $\Phi_1$ vanishes to
order $m\,=\,m_x$ at $x$, then after changing the local frame of $V$ we may write
\[
\widetilde{\Phi}_1\ :\ {\mathcal O}_{\Sigma,x}\ \longrightarrow\ {\mathcal O}_{\Sigma,x}^{\oplus 2},
\qquad f\ \longmapsto\ (z^{m}f,\,0).
\]
From this normal form it is immediate that the cokernel has a torsion summand
${\mathcal O}_{\Sigma,x}/(z^{m})$; in particular, the torsion part of $\mathrm{coker}(\widetilde{\Phi}_1)$
is supported on $D_1$.

The image $\mathrm{im}(\widetilde{\Phi}_1)\,\subset\, V$ is a rank--$1$ subsheaf but it
need not be locally free at points of $D_1$.
We therefore take its {saturation}:
\[
\mathrm{im}(\widetilde\Phi_1)^{\mathrm{sat}}
\ :=\ \ker\!\Bigl(V\longrightarrow (V/\mathrm{im}(\widetilde\Phi_1))/\mathrm{(torsion)}\Bigr).
\]
On a Riemann surface, the saturation of a rank--$1$ subsheaf of a vector bundle is a line subbundle
of the vector bundle. In the above local normal form,
$\mathrm{im}(\widetilde\Phi_1)=z^{m}\mathcal O_{\Sigma,p}\cdot v_1$ while
$\mathrm{im}(\widetilde\Phi_1)^{\mathrm{sat}}\,=\,\mathcal O_{\Sigma,p}\cdot v_1$; thus the discrepancy is measured by the
vanishing order of $\Phi_1$. Globally, one obtains a line subbundle
\[
\mathrm{im}(\widetilde\Phi_1)^{\mathrm{sat}}\ \cong\ K_{\log}^{-1}(D_1)\ \subset\ V,
\]
and the quotient $\mathrm{im}(\widetilde\Phi_1)^{\mathrm{sat}}/\mathrm{im}(\widetilde\Phi_1)$ is a
torsion sheaf supported on $D_1$.

Similarly, viewing $\Phi_2$ as a sheaf morphism gives
\[
\widetilde{\Phi}_2\ :\ V\ \longrightarrow\ K_{\log},
\]
and $\widetilde{\Phi}_2$ induces a surjection of sheaves
\[
V\ \twoheadrightarrow\ K_{\log}(-D_2),
\]
with cokernel a torsion sheaf supported on $D_2$.

Finally, in the mixed conformal case one has $\mathrm{Tr}(\Phi^2)\,=\,0$, which forces
$\widetilde\Phi_2\circ \widetilde\Phi_1\,=\,0$; equivalently, $\mathrm{im}(\widetilde\Phi_1)
\,\subset\, \ker(\widetilde\Phi_2)$.
On the dense open subset $\Sigma\setminus (D_1\cup D_2)$ both maps have rank $1$, so the two line
subsheaves agree there. Taking saturations, yields
\[
\ker(\widetilde{\Phi}_2)\ \cong\ \mathrm{im}(\widetilde{\Phi}_1)^{\mathrm{sat}}\ \cong\ K_{\log}^{-1}(D_1).
\]

Altogether we obtain the logarithmic analogue of the Loftin--McIntosh \cite{LoftinMcIntosh2019} extension sequence:
\begin{equation}\label{eq:sec6-log-LM}
0\ \longrightarrow\ K_{\log}^{-1}(D_1)\ \xrightarrow{\ \widetilde\Phi_1\ }\ V\
\xrightarrow{\ \widetilde\Phi_2\ }\ K_{\log}(-D_2)\ \longrightarrow\ 0.
\end{equation}

\subsection{Summary for later use}\label{subsec:sec6-summary}

Starting from a finite--energy $\rho$--equivariant conformal minimal immersion with complete ends, we obtain:
\begin{itemize}
\item A tame parabolic $\mathrm{PU}(2,1)$--Higgs bundle $(E_*,\,\Phi)$ on $(\Sigma,\,P)$ (see
Corollary~\ref{cor:parabolic-higgs-from-f}), with weights determined by the semisimple parts of the
peripheral conjugacy classes.

\item In the mixed case, a splitting $E\ \cong\ V\oplus \mathcal O_\Sigma$ and the off--diagonal Higgs field
as in \eqref{eq:sec6-offdiag}.

\item The logarithmic extension sequence \eqref{eq:sec6-log-LM} determined by the divisors $D_1$ and $D_2$
of complex/anti--complex points.
\end{itemize}

Section~\ref{sec:alg-characterization} turns these structures into a stability criterion and conversely 
reconstructs the immersion $f$ from stable tame parabolic Higgs data.

\section{The Main criterion: Parabolic stability and reconstruction}\label{sec:alg-characterization}

Let
$$\kappa\ :=\ \deg(K_{\log})\ =\ 2g-2+|P|\ =\ 2g-2+n.$$
As before, $(\rho,\,f)$ denotes a pair consisting of a reductive representation
$\rho\,:\,\pi_1(\Sigma_0)\,\longrightarrow\, \mathrm{PU}(2,1)$ and a $\rho$--equivariant conformal
minimal immersion
$f\,:\,\mathbb H\,\longrightarrow\, \mathbb{CH}^2$ of finite energy and $f$ is proper.

We work in the {\it mixed case}, i.e.,\, it is assumed that the conformal harmonic map
$f$ is neither holomorphic nor anti--holomorphic.
As explained in Sections~\ref{sec:parabolic-setup}--\ref{sec:parabolic-higgs},
such a map determines (and is determined by) a tame parabolic $\mathrm{PU}(2,1)$--Higgs bundle
$(E_*,\,\Phi)$ on $(\Sigma,\,P)$ with $\mathrm{tr}(\Phi)\,=\,0$ and $\mathrm{tr}(\Phi^2)\,=\,0$.
Moreover, in the mixed case one may choose a holomorphic splitting
\begin{equation}\label{eq:sec7-splitting}
E\, \;=\, \; V \oplus \mathcal{O}_\Sigma,
\qquad
\Phi\ =\ \begin{pmatrix}0&\Phi_1\\ \Phi_2&0\end{pmatrix},
\end{equation}
where $\Phi_1\,\in\, H^0\bigl(\Sigma,\, V\otimes K_{\log}(-D_1)\bigr)$ and
$\Phi_2\,\in\, H^0\bigl(\Sigma,\, V^*\otimes K_{\log}(-D_2)\bigr)$ with $D_1$ (respectively, $D_2$) being
an effective divisor on $\Sigma$ given by the anti--complex (respectively, complex) points;
see Section~\ref{sec:parabolic-higgs}.

\subsection{The main criterion and the stability inequalities}

For $i\,=1,\, 2$, denote $$d_i\ :=\ \deg(D_i).$$
In the mixed case there is a canonical filtration by holomorphic subbundles
\begin{equation}\label{eq:sec7-W1W2}
0\ \subset\ W_1\ \subset\ W_2\ \subset\ E,
\qquad W_2\ :=\ \mathcal{O}_\Sigma\oplus W_1,
\end{equation}
where
\begin{equation}\label{eq:sec7-defW1}
W_1\ :=\ \mathrm{im}(\Phi_1)\ =\ \ker(\Phi_2)\ \cong\ K_{\log}^{-1}(D_1).
\end{equation}
The quotient $V/W_1$ is a line bundle and the homomorphism $\Phi_2$ induces an isomorphism
$V/W_1\ \cong\ K_{\log}(-D_2)$; equivalently, $V$ fits into the logarithmic extension
\begin{equation}\label{eq:log-extension-proof}
0\ \longrightarrow\ K_{\log}^{-1}(D_1)\ \longrightarrow\ V\ \longrightarrow\ K_{\log}(-D_2)
\ \longrightarrow\ 0.
\end{equation}

Denote by $W_{1,*}$ (respectively, $W_{2,*}$) the parabolic subbundle of $E_*$ given by
$W_1$ (respectively, $W_2$) with the induced parabolic structure.
At each puncture $p\,\in\, P$, the parabolic structure on $E_*$ is given by a weighted full flag
$0\,=\,F_{p}^0\,\subset\, F_{p}^1\,\subset\, F_{p}^2\,\subset\, F_{p}^3\,=\,E_p$ with parabolic weights
$0\,\le\, \alpha_{p,1}\,\le\, \alpha_{p,2}\,\le\, \alpha_{p,3}\,<\,1$.
Write
\[
\omega_p\ :=\ \alpha_{p,1}+\alpha_{p,2}+\alpha_{p,3}.
\]
For a parabolic line subbundle $L_*\,\subset\, E_*$, we write $\alpha_p(L)$ for the induced parabolic
weight at $p$, i.e.,\, the unique $\alpha_{p,j}$ such that $L_p\,\subset\, F_p^j$ but $L_p\,
\not\subset\, F_p^{j-1}$.
In particular we set
\[
\beta_p\ :=\ \alpha_p(W_1),\qquad \gamma_p\ :=\ \alpha_p(\mathcal{O}_\Sigma)\quad (p\,\in\, P).
\]
With this notation,
\begin{align}
\deg_{\mathrm{par}}(E_*) &\ =\ \deg(E)+\sum_{p\in P}\omega_p\ =\
(d_1-d_2)+\sum_{p\in P}\omega_p,\label{eq:sec7-degparE}\\
\deg_{\mathrm{par}}(W_{1,*}) &\ =\ \deg(W_1)+\sum_{p\in P}\beta_p\ =\
(-\kappa+d_1)+\sum_{p\in P}\beta_p,\label{eq:sec7-degparW1}\\
\deg_{\mathrm{par}}(W_{2,*}) &\ =\ \deg(W_2)+\sum_{p\in P}(\beta_p+\gamma_p)
\ =\ (-\kappa+d_1)+\sum_{p\in P}(\beta_p+\gamma_p).\label{eq:sec7-degparW2}
\end{align}

\begin{theorem}\label{thm:mainthm}
Let $\rho\,:\,\pi_1(\Sigma_0)\,\longrightarrow\, \mathrm{PU}(2,1)$ be a reductive homomorphism
such that the holonomy of $\rho$ around each puncture is parabolic.
Then $\rho$ admits a $\rho$--equivariant conformal minimal immersion
$$f\ :\ \mathbb H\ \longrightarrow\ \mathbb{CH}^2$$ of finite energy {with complete ends} in the mixed case
if and only if there exists a tame parabolic $\mathrm{PU}(2,1)$--Higgs bundle $(E_*,\,\Phi)$ on $(\Sigma,\,P)$
such that the following statements hold:
\begin{enumerate}
\item $E\,=\, V\oplus\mathcal{O}_\Sigma$;

\item $\mathrm{tr}(\Phi)\, =\, 0\,=\, \mathrm{tr}(\Phi^2)$ and $\Phi$ is off--diagonal with respect to a splitting
$E\,=\, V\oplus\mathcal{O}_\Sigma$;
\item the associated divisors $D_1,\, D_2\,\subset\,\Sigma$ are effective and $V$ fits into
\eqref{eq:log-extension-proof};

\item the two strict slope inequalities
\begin{equation}\label{eq:sec7-stab-ineq}
\mu_{\mathrm{par}}(W_{1,*})\ <\ \mu_{\mathrm{par}}(E_*),\qquad \mu_{\mathrm{par}}(W_{2,*})
\ <\ \mu_{\mathrm{par}}(E_*)
\end{equation}
hold, or equivalently, the following explicit parabolic degree inequalities hold
\begin{align}
3\,\deg_{\mathrm{par}}(W_{1,*}) &\ <\ \deg_{\mathrm{par}}(E_*),\label{eq:sec7-ineq1}\\
3\,\deg_{\mathrm{par}}(W_{2,*}) &\ <\ 2\,\deg_{\mathrm{par}}(E_*),\label{eq:sec7-ineq2}
\end{align}
with $\deg_{\mathrm{par}}(E_*),\,\deg_{\mathrm{par}}(W_{1,*}),\,\deg_{\mathrm{par}}(W_{2,*})$ given by
\eqref{eq:sec7-degparE}--\eqref{eq:sec7-degparW2}.
\end{enumerate}
Moreover, in the mixed case these two inequalities are equivalent to the statement that $(E_*,\,\Phi)$
is stable; see Corollary~\ref{cor:sec7-stability-reduction}.
\end{theorem}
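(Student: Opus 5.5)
The proof splits into the two implications. The main tools are the tame nonabelian Hodge correspondence (via \cite[Theorem~5.1, Theorem~6.6]{BGM2020}) and the structural results of Section~\ref{sec:parabolic-higgs}. The heart of the matter is a reduction of parabolic stability to the two inequalities \eqref{eq:sec7-stab-ineq}. I would prove that reduction first, as a standalone lemma: the corollary referred to as Corollary~\ref{cor:sec7-stability-reduction}.

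\emph{Stability reduction.} Let $(E_*,\Phi)$ satisfy (1)--(3) in the mixed case. I would classify all proper nonzero $\Phi$--invariant saturated subsheaves $F\subset E$.

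For rank one, $F$ is an eigenline of the nilpotent Higgs field. Since $\Phi^3=0$, $\Phi$ is nilpotent of generic rank $2$, hence generically of Jordan type $(3)$. Its kernel is generically $\ker\Phi_2\oplus 0$, so $F=W_1$.

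For rank two, $\Phi$--invariance forces $F\supset\Phi(F)\otimes K_{\log}^{-1}$, which is generically nonzero. Then $F$ contains the generic line $\mathrm{im}\,\Phi^2\subset\ker\Phi$, which is $W_1$. Moreover $F/W_1$ must be killed by $\Phi$ modulo $W_1$. Generically $\ker(\Phi \bmod W_1)$ is spanned by $\mathcal{O}_\Sigma$ modulo $W_1$, because $\Phi_2$ is injective on $V/W_1$. Hence $F=W_2$.

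Thus stability is equivalent to the two slope inequalities. Their explicit forms \eqref{eq:sec7-ineq1}--\eqref{eq:sec7-ineq2} follow by inserting $\mathrm{rk}\,E=3$, $\mathrm{rk}\,W_1=1$, $\mathrm{rk}\,W_2=2$ and the degree formulas \eqref{eq:sec7-degparE}--\eqref{eq:sec7-degparW2}. Polystable but not stable is excluded in the mixed case, since any invariant splitting would give an invariant complement to $W_1$ or $W_2$, which the above classification rules out. The parabolic structures on $W_{i,*}$ are the induced ones, so the weights $\beta_p,\gamma_p$ enter automatically.

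\emph{($\Rightarrow$).} Given $f$, Corollary~\ref{cor:proper-iff-complete-parabolic} and Corollary~\ref{cor:parabolic-higgs-from-f} produce a tame parabolic Higgs bundle $(E_*,\Phi)$. Section~\ref{subsec:sec6-mixed-splitting} gives (1)--(2), and Section~\ref{subsec:sec6-log-extension} gives (3). Since $\rho$ is reductive and the harmonic metric is adapted, \cite[Theorem~6.6]{BGM2020} makes $(E_*,\Phi)$ polystable. By the reduction lemma it is then stable, which yields (4).

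\emph{($\Leftarrow$).} Given data satisfying (1)--(4), the lemma gives stability. Using $c=0$ for the adjoint group $\mathrm{PU}(2,1)$, \cite[Theorem~5.1]{BGM2020} yields an adapted harmonic metric, and hence a flat connection $D$ with some monodromy $\rho'$.

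Three points then need to be checked.
\begin{itemize}
\item $\rho'=\rho$. This must be read as: the Higgs data is the one attached to $\rho$ via \cite[Proposition~6.3]{BGM2020}, with the weights and residues prescribed by the parabolic classes $\rho(c_p)$.
\item The associated $\rho$--equivariant map $f$ is harmonic, and it is conformal because $\mathrm{tr}\Phi^2=0$. It is an immersion in the mixed case, since $\Phi_1$ and $\Phi_2$ are not both zero off a discrete set; at common zeros one obtains branch points, which the divisors $D_i$ being the complex and anti--complex loci handles.
\item $f$ has finite energy. The energy density is $|\Phi|^2_H$. Near $p$ the tame model metric with nilpotent-residue behaviour gives $|\Phi|_H^2\sim |z|^{-2}(\log|z|)^{-2}$, which is integrable against $|dz|^2$. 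Completeness of the ends then follows from Proposition~\ref{prop:cusp-complete} and Corollary~\ref{cor:proper-iff-complete-parabolic}.
\end{itemize}

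I expect the main obstacle to be this last analytic step together with the rank-two part of the classification. For finite energy the residue must be nilpotent, so no $|z|^{-2}$ term survives; I would use the strongly parabolic condition and the Simpson--Mochizuki norm estimates for tame harmonic bundles to ensure this. For the classification I would argue carefully at the points of $D_1\cup D_2\cup P$ via saturation.
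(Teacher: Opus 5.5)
Your proposal follows the paper's proof essentially step for step. It reduces stability to $W_{1,*},W_{2,*}$ by classifying the $\Phi$--invariant subbundles (Lemma~\ref{lem:sec7-invariant-subbundles}, Corollary~\ref{cor:sec7-stability-reduction}), obtains polystability from the tame correspondence in the forward direction, and in the converse builds the harmonic map from the stable Higgs bundle, with conformality coming from $\mathrm{tr}(\Phi^2)=0$ and completeness from Theorem~\ref{thm:peripheral-classification}. Where you deviate you are, if anything, more careful: you upgrade polystable to stable via the classification (no invariant complement exists) rather than via the asserted irreducibility of the representation, you use condition (4) and not only (1)--(3) in the converse, and you justify finite energy by the $|z|^{-1}|\log|z||^{-1}$ bound for nilpotent residues. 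The one genuinely loose point, which the paper shares, is the immersion claim: common zeros of $\Phi_1$ and $\Phi_2$ are honest branch points (for instance the zeros of $\omega$ in the $n$--noid example), and describing $D_1,D_2$ as the anti--complex and complex loci does not remove them, so without $\operatorname{supp}D_1\cap\operatorname{supp}D_2=\emptyset$ one only obtains a branched immersion.
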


\begin{remark}
By Theorem \ref{thm:peripheral-classification} $f$ is automatically proper.
\end{remark}

\subsection{$\Phi$--invariant subbundles in the mixed case}

The key point for \eqref{eq:sec7-stab-ineq} is the following: In the mixed
case there are essentially only two proper $\Phi$--invariant subbundles.

\begin{lemma}\label{lem:sec7-invariant-subbundles}
Assume that $(E,\,\Phi)$ is of the off--diagonal form as in \eqref{eq:sec7-splitting} such that
$\Phi_1\,\not\equiv\, 0 \,\not\equiv\, \Phi_2$.
Let $F\,\subset\, E$ be a nonzero proper holomorphic subbundle such that
$\Phi(F)\,\subset\, F\otimes K_{\log}$. Then either $F\,=\,W_1$ or $F\,=\,W_2$.
\end{lemma}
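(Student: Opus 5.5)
The plan is to sort $F$ by its rank, which is $1$ or $2$ since $F$ is nonzero and proper, and in each case to pin $F$ down over a dense open set. Throughout I write $\Sigma^\circ := \Sigma\setminus(D_1\cup D_2\cup P)$. The passage from the generic statement to the global one rests on a single fact: a holomorphic \emph{subbundle} of $E$ is saturated, and two saturated subsheaves that agree on a dense open subset are equal. The whole proof therefore reduces to two pointwise linear-algebra computations on $\Sigma^\circ$.

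The first step is to record that $\Phi$ is pointwise nilpotent. In a local trivialization of $K_{\log}$, write $\Phi(v,s)=(\Phi_1 s,\ \Phi_2 v)$. Then
\[
\Phi^2(v,s)=(\Phi_1\Phi_2 v,\ \Phi_2\Phi_1 s)=(\Phi_1\Phi_2 v,\ 0),
\qquad
\Phi^3(v,s)=(0,\ \Phi_2\Phi_1\Phi_2 v)=0,
\]
where we use $\Phi_2\circ\Phi_1=0$. That identity is equivalent to $\mathrm{tr}(\Phi^2)=0$, because $\mathrm{tr}(\Phi^2)=2\,\Phi_2\Phi_1$; alternatively, it is the statement $W_1=\mathrm{im}(\Phi_1)=\ker(\Phi_2)$ from \eqref{eq:sec7-defW1}. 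Next I identify $\ker\Phi$ and $\mathrm{im}\,\Phi$ at a point $x\in\Sigma^\circ$. There $\Phi_1(x)\neq 0$ and $\Phi_2(x)\neq 0$, which gives
\[
\ker\Phi_x=\ker\Phi_2(x)\oplus 0=(W_1)_x,
\qquad
\mathrm{im}\,\Phi_x=\mathrm{im}\,\Phi_1(x)\oplus \mathrm{im}\,\Phi_2(x)=(W_1)_x\oplus\mathcal O_x=(W_2)_x .
\]

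\emph{Rank one.} Suppose $F$ is a $\Phi$--invariant line subbundle. At each $x\in\Sigma^\circ$, the restriction $\Phi_x|_{F_x}$ is multiplication by a scalar, and that scalar is an eigenvalue of the nilpotent $\Phi_x$, so it is $0$. Hence $F_x\subset\ker\Phi_x=(W_1)_x$, and equality holds because both are lines. Thus $F=W_1$ on $\Sigma^\circ$. Since $F$ is a subbundle and $W_1\cong K_{\log}^{-1}(D_1)$ is saturated in $V$ (hence in $E$), I conclude $F=W_1$.

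\emph{Rank two.} Suppose $F$ is $\Phi$--invariant of rank two. Then $\Phi$ induces an endomorphism of the line $(E/F)_x$, and this induced map is again nilpotent, hence zero. So $\mathrm{im}\,\Phi_x\subset F_x$. Because $\mathrm{im}\,\Phi_x=(W_2)_x$ is two-dimensional, this forces $F_x=(W_2)_x$ on $\Sigma^\circ$. Now $W_2=\mathcal O_\Sigma\oplus W_1$ is a subbundle of $E$, since $W_1$ is saturated in $V$. Applying the saturation argument again gives $F=W_2$.

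I expect the main obstacle to be bookkeeping rather than any single hard step. One must make sure that the identity $\Phi_2\circ\Phi_1=0$, and hence nilpotency, is available as stated; I would quote it from Section~\ref{subsec:sec6-log-extension}. One must also check that excluding $D_1\cup D_2\cup P$ costs nothing, since this is a finite set and the saturation argument only needs a dense open set. If one prefers not to use nilpotency, the alternative is to argue directly: an invariant line lying generically in the image is killed by $\Phi^3=0$, and this route yields the same two cases.
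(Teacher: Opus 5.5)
Your proof is correct, and it takes a different route from the paper's.

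The paper first records $\Phi(E)\subset W_2\otimes K_{\log}$, $\Phi(W_2)\subset W_1\otimes K_{\log}$ and $\Phi(W_1)=0$. It then runs a case analysis on how $F$ sits relative to $W_2$, $W_1$ and $V$, using generic transversality and the projection of $F$ onto the summand $\mathcal{O}_\Sigma$.

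You instead use the single fact that $\Phi$ is fibrewise nilpotent, which follows from $\mathrm{tr}(\Phi^2)=2\,\Phi_2\Phi_1=0$. Then an invariant line lies in $\ker\Phi_x$, and an invariant plane contains $\mathrm{im}\,\Phi_x$ because the induced map on the quotient line is nilpotent and hence zero. You identify $\ker\Phi_x=(W_1)_x$ and $\mathrm{im}\,\Phi_x=(W_2)_x\otimes K_{\log,x}$ off the finite set $D_1\cup D_2\cup P$, and saturation globalizes the result. Conceptually, you are exploiting that $\Phi$ is generically a regular nilpotent, whose only invariant subspaces are its kernel and its image $\ker\Phi^2$.

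What your version buys is that no sub-case can slip through. The paper's rank-one step (``$\Phi(F)\subset W_2\otimes K_{\log}$ cannot lie inside $F\otimes K_{\log}$'') does not address the possibility $\Phi|_F=0$. Its rank-two step infers that $F$ contains a subsheaf generically equal to $\mathcal{O}_\Sigma$ merely because the projection to $\mathcal{O}_\Sigma$ is surjective, and that inference actually needs a further use of invariance. Your kernel/image argument handles both points automatically.

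The paper's approach offers a more hands-on description of how $F$ sits with respect to the splitting $V\oplus\mathcal{O}_\Sigma$. The price is exactly those extra case distinctions, which your argument avoids.
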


\begin{proof}
{}From \eqref{eq:sec7-defW1} we have $\Phi_1(\mathcal{O}_{\Sigma})\,\subset\, W_1\otimes K_{\log}$
and $\Phi_2(W_1)\,=\,0$. Hence it follows that
\[
\Phi(E)\,\subset\, (\mathcal{O}_\Sigma\oplus W_1)\otimes K_{\log}\,=\,W_2\otimes K_{\log},
\ \ \, \Phi(W_2)\,\subset\, W_1\otimes K_{\log}, \ \ \, \Phi(W_1)\,=\,0.
\]

Suppose that $\mathrm{rank}(F)\,=\,1$. If $F\,\not\subset\, W_2$ then, on a Zariski open subset,
$F$ is transverse to $W_2$, and hence $\Phi(F)\,\subset\, W_2\otimes K_{\log}$ cannot lie inside
$F\otimes K_{\log}$. This contradicts the given condition that $F$ is $\Phi$--invariant.
Thus we have $F\,\subset \,W_2$.

If $F\,\not\subset\, W_1$, then $F$ has a nonzero projection to the summand $\mathcal{O}_\Sigma$, and
$\Phi(F)$ has nonzero projection to $W_1\otimes K_{\log}$ (since $\Phi_1\,\not\equiv\, 0$). This
again contradicts the given condition that $\Phi(F)\,\subset\, F\otimes K_{\log}$ because $F\cap W_1
\,=\, 0$ generically. Consequently, we have $F\,\subset\, W_1$, and since $W_1$ is a line bundle,
it follows that $F\,=\,W_1$.

Next suppose that $\mathrm{rank}(F)\,=\,2$. If $F\,\subset\, V$, then from the fact that $\Phi_2(F)
\,\subset\,\mathcal{O}\otimes K_{\log}$ it follows that
$\Phi(F)\,\not\subset\, F\otimes K_{\log}$. So $F\,\not\subset\, V$, and thus $F$ has a nontrivial
projection to $\mathcal{O}_\Sigma$. Since $\mathcal{O}_\Sigma$ has rank one, this projection is surjective on
a Zariski open subset, and consequently $F$ contains a subsheaf which is generically equal to $\mathcal{O}_\Sigma$.
But then $\Phi_1(\mathcal{O}_\Sigma)\,\subset\, W_1\otimes K_{\log}$ and the condition of $\Phi$--invariance force
$W_1\,\subset\, F$. Hence $W_2\,=\,\mathcal{O}_\Sigma\oplus W_1\,\subset\, F$, and by rank considerations
we have $F\,=\,W_2$.
\end{proof}

\begin{cor}\label{cor:sec7-stability-reduction}
In the mixed case, the Higgs bundle $(E_*,\,\Phi)$ is stable if and only if the two strict slope inequalities
in \eqref{eq:sec7-stab-ineq} hold. In other words, it suffices to check the stability condition
for $W_{1,*}$ and $W_{2,*}$.
\end{cor}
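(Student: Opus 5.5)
The plan is to reduce parabolic stability of $(E_*,\Phi)$ to a finite check using Lemma~\ref{lem:sec7-invariant-subbundles}, and then rewrite the two surviving slope conditions as \eqref{eq:sec7-ineq1}--\eqref{eq:sec7-ineq2}.

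Recall that $(E_*,\Phi)$ is stable exactly when every proper nonzero $\Phi$--invariant parabolic subbundle $F_*\subset E_*$ satisfies $\mu_{\mathrm{par}}(F_*)<\mu_{\mathrm{par}}(E_*)$. Here "subbundle" means a saturated subsheaf, with the parabolic structure induced by intersecting flags.

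In the mixed case $\Phi_1\not\equiv0\not\equiv\Phi_2$. By Lemma~\ref{lem:sec7-invariant-subbundles}, the only such subbundles are $W_1$ and $W_2$.

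One point needs care: invariance for a saturated subsheaf is a generic condition. If a saturated subsheaf $F$ satisfies $\Phi(F)\subset F\otimes K_{\log}$ generically, then it satisfies it everywhere, because $F\otimes K_{\log}$ is saturated in $E\otimes K_{\log}$. So the lemma, which is argued on Zariski open sets, covers all candidates.

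Both directions then follow:
\begin{itemize}
\item Stability gives the two inequalities in \eqref{eq:sec7-stab-ineq}, since $W_{1,*}$ and $W_{2,*}$ are among the subbundles being tested. For this I will check that they are genuinely $\Phi$--invariant, using $\Phi(W_1)=0$ and $\Phi(W_2)\subset W_1\otimes K_{\log}$ from the first lines of the lemma's proof.
\item Conversely, if both inequalities hold, the stability condition is verified for every invariant subbundle, since there are no others.
\end{itemize}

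To convert the slope inequalities, use $\mathrm{rk}\,W_1=1$, $\mathrm{rk}\,W_2=2$ and $\mathrm{rk}\,E=3$. Then $\mu_{\mathrm{par}}(W_{1,*})<\mu_{\mathrm{par}}(E_*)$ reads $\deg_{\mathrm{par}}(W_{1,*})<\deg_{\mathrm{par}}(E_*)/3$, which is \eqref{eq:sec7-ineq1}. Likewise $\mu_{\mathrm{par}}(W_{2,*})<\mu_{\mathrm{par}}(E_*)$ reads $\deg_{\mathrm{par}}(W_{2,*})/2<\deg_{\mathrm{par}}(E_*)/3$, which is \eqref{eq:sec7-ineq2}. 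The explicit forms then come from substituting \eqref{eq:sec7-degparE}--\eqref{eq:sec7-degparW2}.

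The main obstacle is keeping the parabolic structure consistent. For $W_{2,*}$, the induced weights must be $\{\beta_p,\gamma_p\}$, which is what \eqref{eq:sec7-degparW2} uses. This holds when the fibres $(W_1)_p$ and $(\mathcal O_\Sigma)_p$ sit in distinct steps of the flag, i.e. when the flag is adapted to the splitting. If they sat in the same step, the weight of the induced flag on $(W_2)_p$ would have to be recomputed.

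I would settle this by noting that the residue $N_p$ is strongly parabolic and off--diagonal. Its image and kernel are therefore built from $(W_1)_p$ and $(W_2)_p$, and the canonical filtration \eqref{eq:canonical-filtration-from-N} makes $(W_2)_p$ a flag member in the generic case. There, the weights of $W_{2,*}$ are the two smallest-indexed ones containing it, matching $\beta_p+\gamma_p$ with the given definitions of $\beta_p$ and $\gamma_p$.

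Polystable but non-stable objects are excluded: equality in either inequality would correspond to strictly semistable objects, which are not allowed under the strict condition.
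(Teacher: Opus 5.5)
Your argument is correct and follows the paper's route: Lemma~\ref{lem:sec7-invariant-subbundles} leaves $W_{1,*}$ and $W_{2,*}$ as the only test objects. You also spell out the direction the paper leaves implicit, namely that $W_1$ and $W_2$ are themselves $\Phi$--invariant, so stability forces both inequalities.

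Your paragraph on the weights $\beta_p+\gamma_p$ is not needed, since \eqref{eq:sec7-stab-ineq} is phrased with the induced parabolic structures. As written it would not prove \eqref{eq:sec7-degparW2} in general: for example, when $N_p=0$ the flag need not be adapted to the lines $(W_1)_p$ and $(\mathcal{O}_\Sigma)_p$. Nothing in the corollary depends on it.
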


\begin{proof}
If $(E_*,\,\Phi)$ is not stable, there is a saturated and $\Phi$--invariant parabolic subsheaf
of $E_*$ for which the stability condition fails.
By Lemma~\ref{lem:sec7-invariant-subbundles}, the only nonzero proper $\Phi$--invariant holomorphic
subbundles are $W_1$ and $W_2$, and the induced parabolic structures are $W_{1,*}$ and $W_{2,*}$
respectively.
\end{proof}

\subsection{Explicit form of the inequalities and the unipotent specialization}

Combining \eqref{eq:sec7-degparE}--\eqref{eq:sec7-degparW2} with
\eqref{eq:sec7-ineq1}--\eqref{eq:sec7-ineq2}, one obtains the following equivalent form
of the stability conditions:
\begin{align}
2d_1+d_2\ &<\ 3\kappa + \sum_{p\in P}\bigl(\omega_p-3\beta_p\bigr),\label{eq:sec7-ineq1-expanded}\\
d_1+2d_2\ &<\ 3\kappa + \sum_{p\in P}\bigl(2\omega_p-3(\beta_p+\gamma_p)\bigr).\label{eq:sec7-ineq2-expanded}
\end{align}
In particular, in the unipotent (weight--zero) case one has $\omega_p\,=\,\beta_p\,=\,\gamma_p\,=\,0$ for
all $p\,\in\, P$ and \eqref{eq:sec7-ineq1-expanded}--\eqref{eq:sec7-ineq2-expanded} reduce to
\begin{equation}\label{eq:sec7-unipotent-ineq}
2d_1+d_2\ <\ 3\kappa,\qquad d_1+2d_2\ <\ 3\kappa,
\end{equation}
which are the familiar mixed--case numerical constraints.

\subsection{Proof of Theorem~\ref{thm:mainthm}}

\begin{proof}
First assume that $f$ exists. By Section~\ref{sec:parabolic-higgs}, the map
$f$ determines a tame parabolic $\mathrm{PU}(2,1)$--Higgs bundle $(E_*,\,\Phi)$ of the form
as in \eqref{eq:sec7-splitting} and \eqref{eq:log-extension-proof}. By the tame non--abelian
Hodge correspondence (\cite{Simpson1992,Biquard1997,BGM2020}), this Higgs bundle is polystable.
In the mixed immersion case the corresponding representation is irreducible, hence $(E_*,
\,\Phi)$ is stable (see, e.g., \cite{Simpson1990} for the irreducible/stable correspondence).
Corollary~\ref{cor:sec7-stability-reduction} shows that stability is detected by $W_{1,*}$
and $W_{2,*}$, and the explicit inequalities \eqref{eq:sec7-ineq1}--\eqref{eq:sec7-ineq2}
follow from the parabolic degree computations \eqref{eq:sec7-degparE}--\eqref{eq:sec7-degparW2}.

Conversely, suppose that a tame parabolic $\mathrm{PU}(2,1)$--Higgs bundle $(E_*,\,\Phi)$ is given
that satisfies (1)--(3). By Corollary~\ref{cor:sec7-stability-reduction} it is stable in the mixed case.
By \cite{Simpson1992,Biquard1997,BGM2020} there is a tame harmonic metric solving
the parabolic Hitchin equations, and hence a reductive representation $\rho$ and a
$\rho$--equivariant harmonic map $f\,:\,\mathbb H\, \longrightarrow\, \mathbb{CH}^2$ of finite energy.
Since $\mathrm{tr}(\Phi^2)\,=\,0$, the Hopf differential of $f$ vanishes, so $f$ is conformal and hence minimal.
The hypotheses $\Phi_1\,\not\equiv\, 0$ and $\Phi_2\,\not\equiv\, 0$ (equivalently, the mixed case) imply
that $df$ has full rank away from $D_1\cup D_2$, with complex/anti--complex points supported on
$D_2$ and $D_1$ respectively.

Finally, because the peripheral holonomy is parabolic, Theorem~ \ref{thm:peripheral-classification}
implies that $f$ is proper which is same as completeness of the induced metric at the punctures. 
\end{proof}

\section{Complete Ends via Nilpotent Residues}\label{sec:parabolic-ends}

Let $f\,:\,\mathbb H\,\longrightarrow\, \mathbb{CH}^2$ be a $\rho$--equivariant conformal harmonic map of 
finite energy whose induced metric has complete ends. By Theorem~\ref{thm:peripheral-classification}, for 
every puncture $p\,\in\, P$ the peripheral holonomy $\rho(c_p)$ is parabolic (here $c_p$ denotes a small 
positively oriented loop about $p$). Let $(E_*,\,\Phi)$ be the associated tame parabolic 
$\mathrm{PU}(2,1)$--Higgs bundle on $(\Sigma,\,P)$ given by Corollary~\ref{cor:parabolic-higgs-from-f}.
Write
\[
N_p\, \;:=\, \; \mathrm{Res}_p(\Phi)\ \in\ \mathrm{End}(E_p).
\]
By Definition~\ref{def:strongly-parabolic}, each $N_p$ is strongly parabolic for the parabolic filtration at 
$p$, hence it is nilpotent. As recalled in Subsection~\ref{subsec:weights-residues}, the parabolic {weights} 
encode the semisimple factor of $\rho(c_p)$, while the nilpotent residue $N_p$ encodes the unipotent factor.

\medskip

\noindent\textbf{Canonical filtration from $N_p$.}
When the rank is $3$ there are exactly two nilpotent Jordan types, as in \eqref{eq:nilpotent-types}.
The residue $N_p$
determines the canonical quasiparabolic filtration \eqref{eq:canonical-filtration-from-N}:
\[
0\subset F^{(1)}_p \subset F^{(2)}_p \subset E_p,
\]
given explicitly by
\[
\begin{array}{ll}
\text{Jordan type $(2,1)$:}
& F^{(1)}_p=\mathrm{im}(N_p),\qquad F^{(2)}_p=\ker(N_p),\\[0.4em]
\text{Jordan type $(3)$:}
& F^{(1)}_p=\ker(N_p),\qquad F^{(2)}_p=\ker(N_p^2)=\mathrm{im}(N_p).
\end{array}
\]
For later use, we set
\begin{equation}\label{eq:LpVp-from-Np}
L_p\;:=\;F^{(1)}_p,
\qquad
V_p\;:=\;F^{(2)}_p,
\end{equation}
so that in all cases we have a canonical full flag
\begin{equation}\label{eq:canonical-flag-at-p}
0\ \subset\ L_p\ \subset\ V_p\ \subset\ E_p,
\qquad
N_p(V_p)\subset L_p,\ \ N_p(L_p)=0.
\end{equation}
In particular, $N_p$ is strictly lowering with respect to \eqref{eq:canonical-flag-at-p}, hence strongly parabolic.

\begin{definition}[End type determined by $N_p$]\label{def:end-type}
Take $p\,\in\, P$, and let $N_p\,=\,\mathrm{Res}_p(\Phi)$.
\begin{enumerate}[label=\textup{(\Roman*)},leftmargin=2.4em]
\item The end at $p$ is of {Type I} (two--block / order $2$) if $N_p\,\neq\, 0$ and $N_p^2\,=\,0$,
equivalently $N_p$ has Jordan type $(2,\,1)$.

\item The end at $p$ is of {Type II} (one--block / order $3$) if $N_p^2\,\neq\, 0$,
equivalently $N_p$ has Jordan type $(3)$ (hence $N_p^3\,=\,0$).
\end{enumerate}
Equivalently, Type~I is characterized by $\dim\ker(N_p)=2$ (the unipotent factor fixes a $2$--plane),
whereas Type~II is characterized by $\dim\ker(N_p)\,=\,1$ (the unipotent factor fixes only a line).
\end{definition}

\section{\texorpdfstring{Example: $\rho$--equivariant proper $n$-noids for $n\ge 5$}{n-noids for $n\ge 5$}}
\label{sec:n-noid}

We construct explicit families of (possibly branched) finite--energy proper minimal immersions
$\mathbb H\,\longrightarrow\, \mathbb{CH}^2$ with $n$ punctures, arising from tame (logarithmic) parabolic
$\mathrm{PU}(2,1)$--Higgs bundles on $\mathbb{CP}^1$.

\subsection{Logarithmic set-up}

Fix distinct points \(P\,=\,\{p_1,\,\cdots,\,p_n\}\,\subset\, \Sigma_c\,:=\,\mathbb{CP}^1\), and set
\[
\Sigma_0\ :=\ \Sigma_c\setminus P .
\]
We work in the logarithmic (tame) category along \(P\). Write
\[
K_{\log}\ :=\ K_{\Sigma_c}(P)\ \cong\ \Omega^1_{\Sigma_c}(\log P).
\]
Thus a logarithmic Higgs field is a section
\(\Phi\,\in\, H^0\bigl(\Sigma_c,\,\mathrm{End}(E)\otimes K_{\log}\bigr)\), equivalently an \(\mathrm{End}(E)\)-valued meromorphic
\(1\)-form on \(\Sigma_0\) with at worst simple poles at \(P\).
Since \(n\,\ge\, 5\), the punctured sphere \(\Sigma_0\) is hyperbolic; hence
\(\widetilde{\Sigma_0}\,\cong\, \mathbb{H}\).

\begin{definition}[{$\mathbb{CH}^2$--$n$--noid}]\label{def:CH2-nnoid}
A {$\mathbb{CH}^2$--$n$--noid} is a $\rho$--equivariant conformal harmonic map
$f\,:\,\mathbb H\,\longrightarrow\, \mathbb{CH}^2$, where $\rho\,:\, \pi_1(\mathbb{CP}_1\setminus P)
\,\longrightarrow\, \mathrm{PU}(2,1)$ is a reductive representation, such that the ends are complete. 
\end{definition}

Below is an example of $n$--noid given by prescribing explicit Higgs data. 

\subsection{Explicit Higgs data}

Fix \(n\,\ge\, 5\). Choose a logarithmic \(1\)-form
\[
\omega\ \in\ H^0\!\bigl(\mathbb{CP}^1,\, K_{\log}\bigr)\ \cong\
H^0\!\bigl(\mathbb{CP}^1,\, {\mathcal O}_{\mathbb{CP}_1}(n-2)\bigr)
\]
having simple poles precisely at \(P\), with residues \(\mathrm{Res}_{p_i}(\omega)\,=\,r_i\in\C\) satisfying
\(\sum_i r_i\,=\,0\). Let
\[
V\ :=\ {\mathcal O}_{\mathbb{CP}^1}\oplus{\mathcal O}_{\mathbb{CP}^1},\qquad L
\ :=\ {\mathcal O}_{\mathbb{CP}^1}(-1),\qquad E\ :=\ V\oplus L .
\]
We write the Higgs field in strictly off--diagonal form with respect to \(E=V\oplus L\):
\[
\Phi\ =\ \begin{pmatrix}0&\beta\\ \gamma&0\end{pmatrix},
\]
where $\gamma\,\in\, H^0\!\bigl(\mathbb{CP}^1,\, \mathrm{Hom}(V,L)\otimes K_{\log}\bigr)$,
$\beta\,\in\, H^0\!\bigl(\mathbb{CP}^1,\, \mathrm{Hom}(L,V)\otimes K_{\log}\bigr)$.
Using
\[
\mathrm{Hom}(V,L)\cong {\mathcal O}_{\mathbb{CP}^1}(-2)\oplus{\mathcal O}_{\mathbb{CP}^1}(-1),\qquad
\mathrm{Hom}(L,V)\cong {\mathcal O}_{\mathbb{CP}^1}(2)\oplus{\mathcal O}_{\mathbb{CP}^1},
\]
and \(K_{\log}\cong {\mathcal O}_{\mathbb{CP}^1}(n-2)\), we obtain the following
\[
\mathrm{Hom}(V,\, L)\otimes K_{\log}\ \cong\ {\mathcal O}_{\mathbb{CP}^1}(n-4)\oplus
{\mathcal O}_{\mathbb{CP}^1}(n-3),
\]
\[
\mathrm{Hom}(L,\, V)\otimes K_{\log}\ \cong\
{\mathcal O}_{\mathbb{CP}^1}(n)\oplus{\mathcal O}_{\mathbb{CP}^1}(n-1).
\]
Choose sections
\[
g_1\,\in\, H^0\bigl(\mathbb{CP}^1,\, {\mathcal O}_{\mathbb{CP}^1}(n-4)\bigr),\ \
g_2\,\in\, H^0\bigl(\mathbb{CP}^1,\, {\mathcal O}_{\mathbb{CP}^1}(n-3)\bigr),\ \
q\,\in\, H^0\bigl(\mathbb{CP}^1,\, {\mathcal O}_{\mathbb{CP}^1}(3)\bigr)
\]
such that \(g_1,\, g_2\) have no common zero and \(q(p_i)\,\neq \,0\) for all \(i\).
Define
\begin{equation}\label{eq:Higgs-data}
\gamma\,=\,(g_1,\, g_2)\,\omega,\ \ \,
\beta\,=\, \begin{pmatrix}-\,q\,g_2\\[2pt] \ \ q\,g_1\end{pmatrix}\,\omega,
\ \ \,
\Phi\,=\, \begin{pmatrix}0&\beta\\ \gamma&0\end{pmatrix}.
\end{equation}

\subsection{Conformality and nilpotent residues}

\begin{lemma}\label{lem:trPhi2-zero}
For \(\Phi\) defined by \eqref{eq:Higgs-data}, \[\operatorname{tr}(\Phi^2)\,\ \equiv\,\ 0.\]
\end{lemma}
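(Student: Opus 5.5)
The plan is a direct block-matrix computation. With respect to $E = V\oplus L$ the Higgs field is off-diagonal, so
\[
\Phi^2\ =\ \begin{pmatrix}\beta\gamma & 0\\ 0 & \gamma\beta\end{pmatrix},
\]
where $\beta\gamma\in \mathrm{End}(V)\otimes K_{\log}^{2}$ is a $2\times 2$ block and $\gamma\beta \in \mathrm{End}(L)\otimes K_{\log}^2 = K_{\log}^2$ is a scalar. Hence
\[
\operatorname{tr}(\Phi^2)\ =\ \operatorname{tr}(\beta\gamma)+\gamma\beta\ =\ 2\,\gamma\beta,
\]
using the cyclicity $\operatorname{tr}(\beta\gamma)=\operatorname{tr}(\gamma\beta)$. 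This identity holds pointwise as a statement about $1\times 2$ and $2\times 1$ matrices of local sections, after trivializing $V$, $L$ and $K_{\log}$ on an open set. So it suffices to show that $\gamma\beta\equiv 0$ as a section of $K_{\log}^2$.

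Next, substitute the explicit data from \eqref{eq:Higgs-data}. On any trivializing open set,
\[
\gamma\beta\ =\ (g_1,\,g_2)\begin{pmatrix}-q\,g_2\\ q\,g_1\end{pmatrix}\omega\otimes\omega\ =\ \bigl(-q\,g_1g_2+q\,g_2g_1\bigr)\,\omega^2\ =\ 0.
\]
The products $g_1 g_2$ and $g_2 g_1$ are the same section of $\mathcal O_{\mathbb{CP}^1}(2n-7)$, because $\mathcal O(a)\otimes\mathcal O(b)\to\mathcal O(a+b)$ is commutative. Therefore $\operatorname{tr}(\Phi^2)\equiv 0$ on $\Sigma_0$, and by continuity (or because it is a holomorphic section vanishing on a dense set) it vanishes as a section of $K_{\log}^2$ on all of $\mathbb{CP}^1$.

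The only point needing care is a bookkeeping check that the products are well typed. We have $\mathrm{Hom}(V,L)\otimes\mathrm{Hom}(L,V)\to\mathrm{End}(L)=\mathcal O$, and the two components pair $\mathcal O(n-4)$ with $\mathcal O(n-1)$ and $\mathcal O(n-3)$ with $\mathcal O(n)$ after twisting. In both cases $g_1\cdot(qg_2)$ and $g_2\cdot(qg_1)$ land in the same line bundle $\mathcal O(2n-4)\cong K_{\log}^2$, so their cancellation is legitimate. There is no genuine obstacle here; the construction of $\beta$ as $q$ times the rotation of $\gamma$ is designed precisely to force $\gamma\beta=0$.
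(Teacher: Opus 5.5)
Your proposal is correct and is essentially the paper's proof. Both compute $\Phi^2=\mathrm{diag}(\beta\gamma,\gamma\beta)$, use cyclicity of the trace, and observe that $CB=g_1(-qg_2)+g_2(qg_1)=0$ pointwise. Your closing degree check is unnecessary, since the cancellation is an identity of local functions in any trivialization, and it slightly misstates the pairing: the components pair as $\mathcal O(n-4)$ with $\mathcal O(n)$ and $\mathcal O(n-3)$ with $\mathcal O(n-1)$, not as you wrote. This slip does not affect the argument.
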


\begin{proof}
Write \(\beta\,=\,B\,\omega\) and \(\gamma\,=\,C\,\omega\), where
\[
B\,=\, \binom{-qg_2}{qg_1},\qquad C\,=\,(g_1,\,g_2).
\]
Then
\[
\Phi^2\ =\ \mathrm{diag}(\beta\gamma,\,\gamma\beta)\ =
\ \mathrm{diag}(BC\,\omega^{\otimes2},\;CB\,\omega^{\otimes2}).
\]
Since \(CB\,=\,g_1(-qg_2)+g_2(qg_1)\,=\,0\), we conclude that \(\mathrm{tr}(\gamma\beta)\,=\,0\), and also
\[\mathrm{tr}(\beta\gamma)\ =\ \mathrm{tr}(BC\,\omega^{\otimes2})\ =\ \mathrm{tr}(CB)\,\omega^{\otimes2}\ =\ 0.\]
Hence it follows that \(\mathrm{tr}(\Phi^2)\,\equiv\, 0\).
\end{proof}

\begin{lemma}\label{lem:nilpotent-residues}
For each \(p_i\,\in\, P\), the residue \(\mathrm{Res}_{p_i}(\Phi)\) is nilpotent. In particular the peripheral
monodromy \(\rho(\gamma_{p_i})\,=\,\exp\!\bigl(2\pi\sqrt{-1}\,\mathrm{Res}_{p_i}(\Phi)\bigr)\) has unipotent part.
\end{lemma}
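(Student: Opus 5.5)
The plan is to compute the residue explicitly and show it is strictly block-lower-triangular with respect to a suitable flag, so its cube vanishes. Near $p_i$ we choose a local coordinate $z$ centered at $p_i$ and local holomorphic trivializations of $V={\mathcal O}\oplus{\mathcal O}$ and $L={\mathcal O}(-1)$. Since $\omega$ has a simple pole at $p_i$ with residue $r_i$, and $g_1,g_2,q$ are holomorphic there, we get
\[
\mathrm{Res}_{p_i}(\Phi)\;=\;r_i\begin{pmatrix}0&B(p_i)\\ C(p_i)&0\end{pmatrix},\qquad B(p_i)=\binom{-q(p_i)g_2(p_i)}{q(p_i)g_1(p_i)},\quad C(p_i)=(g_1(p_i),\,g_2(p_i)),
\]
with $B(p_i)$ and $C(p_i)$ evaluated in the chosen trivializations. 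The residue is well defined as an element of $\mathrm{End}(E_{p_i})$ because $K_{\log}$ has a canonical residue map to ${\mathcal O}_{p_i}$.

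Set $N:=\mathrm{Res}_{p_i}(\Phi)$. The key identity is pointwise: $C(p_i)B(p_i)=-q g_1g_2+q g_2g_1=0$, exactly as in Lemma~\ref{lem:trPhi2-zero}. From this,
\[
N^2=r_i^2\,\mathrm{diag}\bigl(B(p_i)C(p_i),\,C(p_i)B(p_i)\bigr)=r_i^2\,\mathrm{diag}\bigl(B(p_i)C(p_i),\,0\bigr),
\]
and
\[
N^3=r_i^3\begin{pmatrix}0&B\,CB\\ CB\,C&0\end{pmatrix}(p_i)=0.
\]
Hence $N$ is nilpotent. One can also read off the Jordan type: if $r_i\neq0$ and $g_1(p_i),g_2(p_i)$ are not both zero (guaranteed by the no-common-zero hypothesis) and $q(p_i)\neq0$, then $B(p_i)C(p_i)\neq0$. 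This holds because $B(p_i)$ and $C(p_i)$ are then both nonzero, and a rank-one product $BC$ of nonzero vectors is nonzero. In that case $N^2\neq0$ and the end is of Type~II. If instead $r_i=0$, then $N=0$. Recording this is optional, but it connects to Definition~\ref{def:end-type}.

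For the monodromy statement, we use the tame correspondence recalled in Section~\ref{sec:parabolic-higgs} with all weights zero. The semisimple part of the local monodromy is determined by the weights, hence trivial, and the unipotent part is governed by the nilpotent residue. So $\rho(\gamma_{p_i})$ is unipotent, and exponentiating $N$ produces a unipotent element since $N^3=0$.

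The algebra is routine. The only genuine point needing care is this last identification: local monodromy with weight zero and nilpotent residue is unipotent. We do not prove it here but cite Simpson/Mochizuki (\cite{Simpson1990,mochizuki2008}), as the paper does elsewhere. The formula $\exp(2\pi\sqrt{-1}N)$ should be read as a statement about conjugacy classes, via that correspondence.
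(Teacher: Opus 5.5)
Your proof is correct and is essentially the paper's argument. You compute $\mathrm{Res}_{p_i}(\Phi)=r_i\bigl(\begin{smallmatrix}0&B(p_i)\\ C(p_i)&0\end{smallmatrix}\bigr)$ in local frames and use the pointwise identity $C(p_i)B(p_i)=0$ to get $N^3=0$, exactly as the paper does. Your additions are sound refinements of what the paper takes for granted: the Type~II observation (the case $r_i=0$ is vacuous, since $\omega$ has simple poles exactly at $P$), and the caveat that $\rho(\gamma_{p_i})=\exp\bigl(2\pi\sqrt{-1}\,\mathrm{Res}_{p_i}(\Phi)\bigr)$ must be read through the tame correspondence with zero weights rather than literally.
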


\begin{proof}
Near \(p_i\) choose a coordinate \(z\) such that
\(\omega\,=\,\frac{r_i\,dz}{z}+(\text{holomorphic})\).
Then \(\Phi\) has a simple pole at \(p_i\) and
\[
\mathrm{Res}_{p_i}(\Phi)\ =\ r_i
\begin{pmatrix}
0 & B(p_i)\\
C(p_i) & 0
\end{pmatrix}
\ =:\ r_i\,N_{p_i},
\qquad
B\ =\ \binom{-qg_2}{qg_1},\ \, C\,=\,(g_1,\,g_2).
\]
Moreover \(C(p_i)B(p_i)\,=\,g_1(-qg_2)+g_2(qg_1)\,=\,0\), hence \(N_{p_i}^3\,=\,0\), so \(N_{p_i}\) is nilpotent.
Therefore \(\exp(2\pi\sqrt{-1}\,r_iN_{p_i})\) is unipotent.
\end{proof}

\subsection{Stability and existence of proper $n$--noids}

Let \(D_1\,=\,\mathrm{div}(g_1)\) and \(D_2\,=\,\mathrm{div}(g_2)\). Since \(g_1,\,g_2\,\neq\, 0\),
\[
d_1\ :=\ \deg D_1\ =\ n-4,\qquad d_2\ :=\ \deg D_2\ =\ n-3.
\]
On \(\Sigma_c\,=\,\mathbb{CP}^1\) we have \(K_{\log}\,\cong\, {\mathcal O}_{\mathbb{CP}^1}(n-2)\), so \(\deg K_{\log}
\,=\,n-2\) and
\(3\deg K_{\log}\,=\,3n-6\). Hence it follows that
\[
2d_1+d_2\ =\ 3n-11\ <\ 3n-6,\qquad d_1+2d_2\ =\ 3n-10\ <\ 3n-6.
\]
Thus the slope inequalities required in Theorem~\ref{thm:mainthm} hold.

\begin{prop}\label{prop:nnoid-stability}
Assume that $g_1$ and $g_2$ have no common zeros.
For the Higgs field $\Phi\,=\,\left(\begin{smallmatrix}0&\beta\\ \gamma&0\end{smallmatrix}\right)$ defined
in \eqref{eq:Higgs-data},
the only proper $\Phi$--invariant saturated holomorphic subbundles of $E\,=\,V\oplus L$ are
\[
F_1\ =\ \ker\gamma\ \subset\ V,\qquad F_2\ =\ \ker\gamma\oplus L\ \subset\ E.
\]
Consequently, $\deg(F_1)\,=\,4-n$ and $\deg(F_2)\,=\,3-n$. In particular, if $n\,\ge\, 5$
then $\deg(F_i)\,<\,0\,=\,\deg(E)$ for $i\,=\,1,\,2$, so $(E,\,\Phi)$ is stable; if $n\,=\,4$, then
$\deg(F_1)\,=\, 0$ and $(E,\,\Phi)$ is (in general) strictly semistable.
\end{prop}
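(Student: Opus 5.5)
The plan is to reduce to the structural argument of Lemma~\ref{lem:sec7-invariant-subbundles}, applied to the splitting $E=V\oplus L$ in place of $V\oplus\mathcal{O}_\Sigma$, and then compute two degrees. First I record the basic identities coming from \eqref{eq:Higgs-data}. Since $CB=g_1(-qg_2)+g_2(qg_1)=0$, we get $\gamma\circ\beta=0$, so $\mathrm{im}(\beta)\subset\ker(\gamma)$. Both $\beta$ and $\gamma$ are generically of rank one: $C=(g_1,g_2)\neq 0$, and $B\neq 0$ because $q\not\equiv 0$.

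Next I identify $F_1$. Since $g_1,g_2$ have no common zero, the row vector $C$ is nowhere vanishing. Hence $\gamma$, viewed as a sheaf map $V\to L\otimes K_{\log}$, has constant rank one on $\Sigma_0$, and $\ker\gamma$ is a saturated line subbundle of $V$. The column $B=(-qg_2,\,qg_1)^t$ vanishes exactly at the zeros of $q$. So the saturation of $\mathrm{im}(\beta)$ is spanned by $(-g_2,g_1)^t$, and it coincides with $\ker\gamma$ because both are saturated rank-one subsheaves that agree generically. This reproduces the picture of \eqref{eq:sec7-defW1}: $W_1=\ker\gamma=\mathrm{im}(\beta)^{\mathrm{sat}}$, and I set $W_2=W_1\oplus L$. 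One then has
\[
\Phi(E)\subset W_2\otimes K_{\log},\qquad \Phi(W_2)\subset W_1\otimes K_{\log},\qquad \Phi(W_1)=0 .
\]

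The classification step then copies the proof of Lemma~\ref{lem:sec7-invariant-subbundles} verbatim, since that proof only used these three inclusions together with $\beta\not\equiv 0\not\equiv\gamma$.
\begin{itemize}
\item[(i)] A $\Phi$--invariant line subbundle must lie in $W_2$. If it has nonzero projection to $L$, then $\beta$ moves it into $W_1$, which contradicts invariance. So it equals $W_1$.
\item[(ii)] A $\Phi$--invariant rank-two subbundle $F$ cannot lie in $V$. Indeed $\gamma(F)\subset L\otimes K_{\log}$, so invariance would force $F\subset\ker\gamma$, which has rank one. Hence $F$ projects onto $L$ generically, so it contains $\beta(L)^{\mathrm{sat}}=W_1$, and therefore $F=W_2$.
\end{itemize}

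The degrees come from the exact sequence $0\to\ker\gamma\to V\to \mathrm{im}(\gamma)^{\mathrm{sat}}\to 0$. Here the quotient is the line bundle spanned by the nowhere-vanishing $C$, so its degree is read off from the bidegree of $(g_1,g_2)$. This should yield $\deg F_1=4-n$, and then $\deg F_2=\deg F_1+\deg L=3-n$.

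The main obstacle I expect is exactly this degree bookkeeping. The twists recorded for $\mathrm{Hom}(V,L)\otimes K_{\log}$ and $\mathrm{Hom}(L,V)\otimes K_{\log}$ must be made consistent with $V=\mathcal{O}\oplus\mathcal{O}$ and $L=\mathcal{O}(-1)$. Equally, the comparison should really be made with slopes: $\mu(F_1)<\mu(E)$ and $\mu(F_2)<\mu(E)$, as in Corollary~\ref{cor:sec7-stability-reduction}, after normalizing by a line bundle so that we are working with $\mathrm{PU}(2,1)$ data. I would first fix the degrees of the two summands of $V$ so that $(g_1,g_2)$ gives a well-defined map to a single line bundle. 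I would then compute $\deg F_1$ as $\deg V$ minus the degree of that line bundle. Finally I would check that both strict slope inequalities hold for $n\ge5$, and that for $n=4$ the subbundle $F_1$ attains the slope of $E$, giving strict semistability. Once the normalization is settled, stability for $n\ge 5$ follows from Corollary~\ref{cor:sec7-stability-reduction}.
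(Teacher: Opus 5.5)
Your classification step is essentially the paper's proof: the paper organizes the cases by whether $\pi_L(F)=0$ rather than by rank, but uses the same facts, namely $\gamma\circ\beta=0$ and the saturatedness of $\ker\gamma$. In the rank-two case both versions compress the step $W_1\subset F$; it follows by applying $\Phi$ and $\Phi^2$ to a local section $v+\ell$ of $F$ with $\ell\neq 0$, since $\Phi^2(v+\ell)=\beta(\gamma(v))$.

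The paper states the degrees without computing them, and your bookkeeping plan does produce them once you fix the normalization $V=\mathcal{O}_{\mathbb{CP}^1}(1)\oplus\mathcal{O}_{\mathbb{CP}^1}$. This is the only choice compatible with $L=\mathcal{O}_{\mathbb{CP}^1}(-1)$ and the stated degrees of $g_1,g_2,q$, reading $\gamma=(g_1,g_2)$ directly as a section of $\mathrm{Hom}(V,L)\otimes K_{\log}$. With it, $\gamma$ is surjective onto $L\otimes K_{\log}\cong\mathcal{O}_{\mathbb{CP}^1}(n-3)$, so $\deg E=0$, $\deg F_1=1-(n-3)=4-n$ and $\deg F_2=3-n$.
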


\begin{proof}
Let $F\,\subset\, E$ be a $\Phi$--invariant saturated subbundle.
Write $\pi_L\,:\,E\, \longrightarrow\, L$ for the projection and $F_V\,:=\,F\cap V$.

If $\pi_L(F)\,=\,0$, then $F\,\subset\, V$. Since $\Phi\big\vert_V\,=\,\gamma\,:\,V
\, \longrightarrow\,L\otimes K_{\log}$, the condition of $\Phi$--invariance
ensures that $\gamma(F)\,=\,0$, hence
$F\,\subset\, \ker\gamma$. By saturation, $F\,=\,\ker\gamma$.

If $\pi_L(F)\,\neq\, 0$, then $L\,\subset\, F$ (because $L$ has rank $1$).
The $\Phi$--invariance condition implies that $\Phi(L)\,=\,\beta(L)\,\subset\, F\otimes K_{\log}$, hence
we have $\beta(L)\,\subset\, F_V\otimes K_{\log}$.
But $\gamma\circ \beta\,=\,0$ by construction, so $\beta(L)\,\subset\, \ker\gamma\otimes K_{\log}$, which
gives that $F_V\,\subset\, \ker\gamma$. Therefore, $F\,\subset\, \ker\gamma\oplus L$, and
the saturation condition implies that $F\,=\,\ker\gamma\oplus L$.
\end{proof}

\begin{theorem}\label{thm:CH2-n-noid}
Let \(n\,\ge\, 5\) and \(P\,=\,\{p_1,\,\cdots,\,p_n\}\,\subset\, \mathbb{CP}^1\) be distinct, with
\(\Sigma_0\,=\,\mathbb{CP}^1\setminus P\).
Then the logarithmic Higgs data \eqref{eq:Higgs-data} determine a representation
\(\rho\,:\,\pi_1(\Sigma_0)\,\longrightarrow\, \mathrm{PU}(2,1)\) and a \(\rho\)--equivariant
conformal harmonic map
\[
f\ :\ \mathbb{H}\ \longrightarrow\ \mathbb{CH}^2,
\]
which is complete, finite energy, and having cuspidal ends (unipotent monodromy). 
\end{theorem}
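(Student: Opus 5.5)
The plan is to assemble the theorem from the earlier results: stability (Proposition~\ref{prop:nnoid-stability}), tame nonabelian Hodge theory (Theorem~\ref{thm:mainthm}), and the peripheral analysis of Section~\ref{sec:Isometry-Properness-Completenss}. We equip $E=V\oplus L$ with the parabolic structure at each $p_i$ given by the canonical flag \eqref{eq:canonical-flag-at-p} of $N_{p_i}=\mathrm{Res}_{p_i}(\Phi)/r_i$, with all weights zero. By Lemma~\ref{lem:nilpotent-residues} each $N_{p_i}$ is nilpotent, so $\Phi$ is strongly parabolic for this flag. Since $q(p_i)\neq 0$ and $g_1,g_2$ have no common zero, $B(p_i)\neq 0\neq C(p_i)$, so $N_{p_i}\neq 0$. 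With zero weights, parabolic degrees are ordinary degrees, and Proposition~\ref{prop:nnoid-stability} gives stability of $(E_*,\Phi)$ for $n\ge5$. Here $\deg E=-1\neq 0$. I would first pass to the $\mathrm{PU}(2,1)$ class by tensoring with a line bundle, as in Section~\ref{sec:parabolic-setup}, or simply compare slopes $\mu(F_i)<\mu(E)=-1/3$. This check gives $4-n<-1/3$ and $(3-n)/2<-1/3$, both true for $n\ge5$. This slope check is the form that actually matches \eqref{eq:sec7-ineq1}--\eqref{eq:sec7-ineq2}.

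Next, invoke the tame correspondence \cite{Simpson1990,Biquard1997,BGM2020}, with $c=0$ since $\mathrm{PU}(2,1)$ is adjoint. This produces an adapted harmonic metric, a flat connection $D$ on $\Sigma_0$, hence a reductive (indeed irreducible) $\rho$, and a $\rho$--equivariant harmonic map $f$. The splitting $E=V\oplus L$ is preserved by the real structure, because $\Phi$ is off--diagonal; this is the $\mathbb{Z}/2$ (Cartan involution) symmetry, and uniqueness of the metric forces the metric to split. So the monodromy lies in $\mathrm{U}(2,1)$ and $f$ lands in $\mathbb{CH}^2$. Lemma~\ref{lem:trPhi2-zero} gives vanishing Hopf differential, so $f$ is conformal. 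Both $\beta,\gamma\not\equiv0$, so $f$ is in the mixed case and $df$ has full rank off the zeros. Branch points can occur only where both blocks degenerate, consistent with ``possibly branched''. Finite energy follows because, for an adapted metric with nilpotent residue, $|\Phi|^2_h$ near $p_i$ is $O\bigl(|z|^{-2}(\log|z|)^{-2}\bigr)$. That bound is integrable against $|dz|^2$, and the energy is $\int|\Phi|^2$ up to constants.

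Finally, for the ends: with zero weights and nilpotent residue, the local monodromy is $\exp(2\pi\sqrt{-1}\,r_iN_{p_i})$ up to conjugacy (Lemma~\ref{lem:nilpotent-residues}). This element is unipotent and nontrivial, since $r_i\neq0$ and $N_{p_i}\neq0$. A nontrivial unipotent element of $\mathrm{PU}(2,1)$ is parabolic. Then Proposition~\ref{prop:cusp-complete} gives properness at every puncture, and Corollary~\ref{cor:proper-iff-complete-parabolic} gives completeness of $g_f$.

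I expect the main obstacle to be the identification of the monodromy and its nontriviality. The residue of the Higgs field is not literally the logarithm of the monodromy. Rather, Simpson's table says that the monodromy's unipotent part is governed by the nilpotent residue of the associated filtered flat bundle, whose Jordan type agrees with that of $\mathrm{Res}(\Phi)$ in the weight-zero, nilpotent case. I would cite Simpson's correspondence of Jordan types (weights zero, eigenvalue zero) to conclude that the monodromy is unipotent with the same nonzero Jordan type, hence not the identity and so parabolic rather than elliptic. A secondary point is the finite-energy estimate, for which I would use Simpson's model metric $|\log|z||^{k}$ on the graded pieces.
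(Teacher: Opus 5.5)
Your proposal is correct and follows the same route as the paper's proof. That route is stability from Proposition~\ref{prop:nnoid-stability}, the tame nonabelian Hodge correspondence (which the paper invokes through Theorem~\ref{thm:mainthm}), conformality from Lemma~\ref{lem:trPhi2-zero}, and unipotent, hence parabolic, peripheral monodromy fed into Theorem~\ref{thm:peripheral-classification} and Corollary~\ref{cor:proper-iff-complete-parabolic}. Along the way you supply steps the paper leaves implicit:
\begin{itemize}
\item nontriviality of the monodromy, which is needed because the identity is elliptic, not parabolic;
\item the Simpson-table identification, in place of the paper's literal formula $\exp(2\pi\sqrt{-1}\,\mathrm{Res}_{p_i}\Phi)$;
\item the splitting of the harmonic metric, which puts the monodromy in $\mathrm{U}(2,1)$;
\item the Poincar\'e-type bound on $|\Phi|_h$, which gives finite energy.
\end{itemize}

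One bookkeeping caveat concerns degrees. The values $\deg F_1=4-n$ and $\mathrm{Hom}(V,L)\cong\mathcal{O}(-2)\oplus\mathcal{O}(-1)$ are only consistent with $V\cong\mathcal{O}(1)\oplus\mathcal{O}$, i.e.\ $\deg E=0$. So they should not be paired with $\deg E=-1$. In any case, twisting by a line bundle on $\mathbb{CP}^1$ cannot change $\deg E$ modulo $3$. The slope inequalities you need still hold for $n\ge 5$ in either reading.
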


\begin{proof}
Fix \(\omega,\,g_1,\,g_2,\,q\) as above and form \((E,\,\Phi)\) by \eqref{eq:Higgs-data}.
The degree inequalities displayed above imply that the numerical hypotheses of
Theorem~\ref{thm:mainthm} are satisfied. For $n\,\ge\, 5$ the resulting weight--$0$ parabolic
$\mathrm{PU}(2,1)$--Higgs bundle is stable by Proposition~\ref{prop:nnoid-stability}.
Therefore Theorem~\ref{thm:mainthm} produces a reductive representation
\(\rho\,:\,\pi_1(\Sigma_0)\,\longrightarrow\,\mathrm{PU}(2,1)\) and a \(\rho\)--equivariant harmonic map
\(f\,:\,\mathbb H\,\longrightarrow\, \mathbb{CH}^2\) of finite energy.

By Lemma~\ref{lem:trPhi2-zero}, \(\mathrm{tr}(\Phi^2)\,\equiv\, 0\), so the Hopf differential of \(f\) vanishes;
hence \(f\) is conformal and therefore a (possibly branched) minimal immersion.

Finally, by Lemma~\ref{lem:nilpotent-residues}, each peripheral holonomy element is unipotent (hence parabolic).
By Theorem~\ref{thm:peripheral-classification}, finite energy together with parabolic peripheral holonomy implies
that \(f\) is proper around each cusp. This gives the desired proper \(\mathbb{CH}^2\) \(n\)--noid with \(n\) cuspidal ends.
\end{proof}

The construction requires that $g_1\,\in\, H^0({\mathcal O}_{\mathbb{CP}^1}(n-4))$, so it starts at $n\,\ge\, 4$.
Proposition~\ref{prop:nnoid-stability} shows that for $n\,=\,4$ there is a $\Phi$--invariant line subbundle
$F_1\,=\,\ker\gamma\subset V$ of degree $0\,=\,\deg(E)/3$, so
$(E,\,\Phi)$ is in general only {strictly semistable}. In particular, one cannot expect a
harmonic metric (and hence a harmonic map) for generic $n\,=\,4$ data: the tame nonabelian
Hodge correspondence produces a harmonic metric precisely for {polystable} parabolic Higgs bundles,
and polystability imposes additional splitting conditions in this borderline case.
Even when polystability holds, the associated representation is reducible and the resulting harmonic map
may fail to be an immersion. For this reason we only claim existence of genuine $\mathbb{CH}^2$--$n$--noids
in the stable range $n\,\ge\, 5$.

Finally, adding nontrivial parabolic weights (equivalently, allowing screw--parabolic semisimple parts)
does not change the list of
$\Phi$--invariant subbundles in Proposition~\ref{prop:nnoid-stability}; it only replaces ordinary degrees
by parabolic degrees. Thus in the weighted setting stability reduces to the two inequalities
$\deg_{\mathrm{par}}(F_i)\,<\,\deg_{\mathrm{par}}(E)$ for $i\,=\,1,\,2$ (cf.\ Theorem~\ref{thm:mainthm}
and Section~\ref{sec:alg-characterization}). In particular, for $n\,\ge\, 5$ these inequalities
hold for all sufficiently small weights near $0$, producing screw--parabolic deformations of
the unipotent $n$--noids above.

\bibliographystyle{amsplain}
\bibliography{biblio}

\end{document}